\documentclass[12pt]{amsart}
\usepackage[margin=1in]{geometry}
\usepackage{pgfplots}
\usetikzlibrary{calc}
\usepackage{amsmath}
\usepackage{amsfonts}
\usepackage{amssymb}
\usepackage{epsfig}
\pagestyle{myheadings}
\usepackage{euscript}
\usepackage{color}
\usepackage{tikz}
%\usetikzlibrary{datavisualization}
%\usetikzlibrary{datavisualization.formats.functions}
\usepackage [all,arc,curve,color, arrow, matrix,frame]{xy}

 \newtheorem{theorem}{Theorem}[section]
 \newtheorem{corollary}[theorem]{Corollary}
 \newtheorem{lemma}[theorem]{Lemma}
 \newtheorem{proposition}[theorem]{Proposition}
 \theoremstyle{definition}
 \newtheorem{definition}[theorem]{Definition}
  
 \theoremstyle{remark}
 \newtheorem{remark}[theorem]{Remark}

 \numberwithin{equation}{section}
\numberwithin{figure}{section}
\newtheorem{example}[theorem]{Example}

\newcommand{\be}{\begin{equation}}
\newcommand{\ee}{\end{equation}}
\newcommand{\bea}{\begin{eqnarray}}
\newcommand{\eea}{\end{eqnarray}}

\newcommand{\C}{\mathbb{C}}
\newcommand{\R}{\mathbb{R}}

\newcommand{\N}{\mathbb{N}}

\newcommand{\Real}{{\mbox{Re}}}
\newcommand{\Imag}{{\mbox{Im}}}

\newcommand{\spann}{{\mbox{span}}}
\newcommand{\ran}{{\mbox{ran}}}

\def\idty{{\mathchoice {\mathrm{1\mskip-4mu l}} {\mathrm{1\mskip-4mu l}} %
{\mathrm{1\mskip-4.5mu l}} {\mathrm{1\mskip-5mu l}}}}

\numberwithin{equation}{section}

\topmargin-.5in \textwidth6.6in \textheight9in \oddsidemargin0in

\begin{document}
\title[On proper dissipative extensions]{The Proper Dissipative Extensions of a Dual Pair}

\author[C. Fischbacher]{Christoph Fischbacher$^1$}
\address{$^1$ School of Mathematics, Statistics and Actuarial Science, University of Kent, Canterbury, Kent CT2 7NF, UK}
\email{cf299@kent.ac.uk}

\author[S. Naboko]{Sergey Naboko$^2$}
\address{$^2$ Department of Math.\ Physics, Institute of Physics, St.\ Petersburg State University 1 Ulianovskaia, St.\ Petergof, St.\ Petersburg  198504, Russia
}
\email{sergey.naboko@gmail.com}

\author[I. Wood]{Ian Wood$^3$}
\address{$^3$ School of Mathematics, Statistics and Actuarial Science, University of Kent, Canterbury, Kent CT2 7NF, UK}
\email{i.wood@kent.ac.uk}

\date{}

%\vspace{.3truein}
%\centerline{\bf Abstract}\mdeskip
%
\begin{abstract}
Let $A$ and $(-\widetilde{A})$ be dissipative operators on a Hilbert space $\mathcal{H}$ and let $(A,\widetilde{A})$ form a dual pair, i.e. $A\subset\widetilde{A}^*$, resp.\ $\widetilde{A}\subset A^*$. We present a method of determining the proper dissipative extensions $\widehat{A}$ of this dual pair, i.e. $A\subset \widehat{A}\subset\widetilde{A}^*$ provided that $\mathcal{D}(A)\cap\mathcal{D}(\widetilde{A})$ is dense in $\mathcal{H}$.
 Applications to symmetric operators, symmetric operators perturbed by a relatively bounded dissipative operator and more singular differential operators are discussed.
 Finally, we investigate the stability of the numerical range of the different dissipative extensions.

\end{abstract}

\maketitle
\vspace{-7mm}
%%%%%%%%%%%%%%%%%%%%%%%%%%%%%%%%%%%%%%%%%%%%%%%%%%%%%%%%%%
\section{Introduction} \label{sec:intro}  
The purpose of this paper is to develop a straightforward method for computing the proper dissipative extensions of a given dual pair of operators $(A,\widetilde{A})$, where $A$ and $(-\widetilde{A})$ are dissipative, under the mild assumption that $\mathcal{D}(A)\cap\mathcal{D}(\widetilde{A})$ is dense.\\
Numerous authors have contributed to the study of abstract extension problems for operators on Hilbert spaces, which goes at least back to von Neumann \cite{vNeumann} whose well-known von Neumann formulas provide a full characterization of all selfadjoint extensions of a given symmetric operator.
As it would be impossible to give an exhaustive overview here, let us just mention the results of Kre\u\i n, Vishik, Birman and Grubb (\cite{Krein}, \cite{Vishik}, \cite{Birman} and \cite{Grubb68}) who described all positive selfadjoint extensions of a given positive symmetric operator using positive selfadjoint operators on an auxiliary boundary space (cf.\ the survey \cite{Alonso-Simon} as well as the addendum acknowledging Grubb's contributions to the field \cite{Grubbfriendly}). Beyond that, Grubb's methods also allowed her to determine sectorial and $m$-sectorial extensions of positive symmetric operators \cite{Grubb68}.
For a much broader overview over the field, let us point the interested reader to the survey \cite{AT2009} and all the references therein (in particular also to the study of the extensions of linear relations rather than just operators).\\  
In his seminal paper \cite{Phillips59}, Phillips coined the term of a dissipative operator. He showed that dissipative operators always allow for maximally dissipative extensions, which are generators of $C_0$-semigroups of contractions. In order to determine these maximally dissipative extensions, he employed Kre\u\i n space methods as well as finding contractive extensions of the Cayley transform associated to the operator.\\
Lyantze and Storozh determined the maximally dissipative extensions of operators that one obtains by slightly varying abstract boundary conditions in the domain of certain symmetric operators such that the resulting operators are dissipative \cite{LyantzeStorozh}.\\
Moreover, for the sectorial case and for contributions towards extensions of dual pairs of operators, authors like Arlinski\u\i , Derkach, Kovalev, Malamud, Mogilevskii and {Tsekanovski\u\i} \cite{Arlinskii95, Kovalev, AT2005, DM91, MM97, MM99, MM02} have made many contributions using form methods and boundary triples in order to determine $m$-sectorial and $m$-accretive extensions (for an overview cf. \cite{Arli} and all the references therein). 
In particular for boundary triples, there has recently been a significant increase of interest with special attention towards their applications to PDE problems usually in the selfadjoint case \cite{BGW09, BHMNW09, BMNW08, GMZ07, GM08, GM09, GM09b, GM11, Gru08, Gru14, HMM13}.
Let us also point out examples, where explicit computations of maximally dissipative (resp.\ accretive) extensions for positive symmetric differential operators \cite{EvansKnowles85}, \cite{EvansKnowles86} and for sectorial Sturm-Liouville operators \cite{BrownEvans} have been performed.
Lastly, let us also mention that some recent developments in the theory of maximal monotone nonlinear operators can be found in \cite{Trostorff}. \\
We will proceed as follows:\\
In Section 2, we will give a few basic definitions and recall some useful results regarding dual pairs and dissipative operators and their extensions. \\
In Section 3, we introduce the common core property of a dual pair $(A,\widetilde{A})$, which ensures that the dual pair under consideration provides us with a convenient way of defining an operator $V$ that corresponds to the ``imaginary part" of $A$. \\
It will be the square root of the selfadjoint Kre\u\i n-von Neumann extension of $V$ -- denoted by $V_K^{1/2}$ -- which will play an important role for the results obtained in Section 4. The description of $V_K^{1/2}$ obtained by Ando and Nishio \cite{Ando-Nishio} will allow us to give a necessary and sufficient condition (Theorem \ref{prop:telemann}) for an extension of $(A,\widetilde{A})$ to be dissipative, which we only have to check on the space by which we extend the operator $A$ rather than on the whole domain of the extension. From this result, we proceed to give a description of all dissipative extensions of the dual pair $(A,\widetilde{A})$ in terms of contractions from one ``small" auxiliary space to another. We also generalize our results to the case that the common core property is not satisfied by the dual pair as long as $\mathcal{D}(A)\cap\mathcal{D}(\widetilde{A})$ is still dense. As a first application, we start by considering symmetric operators with relatively bounded dissipative perturbations and after that, we consider more singular dissipative operators -- our first examples being such that the associated imaginary part $V$ is already essentially selfadjoint and our last example being such that there is a family of selfadjoint extensions of $V$.\\
Finally, in Section 5, we find lower bounds for the numerical range of the dissipative extensions we have obtained and apply this result to the examples from the previous section.
\section*{Acknowledgements}
It is a pleasure to acknowledge Malcolm Brown for useful discussions and pointing out numerous references. C.F.\ would also like to thank Petr Siegl for providing him with the reference for the Hess-Kato Theorem \cite{Hess-Kato} and Eduard {Tsekanovski\u\i} for communicating various references.
C.F.\ acknowledges the UK Engineering and Physical Science Research Council (Doctoral Training Grant Ref.\ EP/K50306X/1) and the School of Mathematics, Statistics and Actuarial Science at the University of Kent for a PhD studentship.
S.N.\ was supported by the Russian Science Foundation (grant number 15-11-30007). S.N.\ also expresses his deep gratitude to the University of Kent at Canterbury for the hospitality and Marie Curie grant.

\section{Some definitions and conventions}
\subsection{Dissipative operators}
Let us begin with a few basic definitions and results on dissipative operators.
\begin{definition} 
 An operator $A$ on a Hilbert space $\mathcal{H}$ is said to be {\bf{dissipative}} if and only if it is densely defined and
 \begin{equation*}
 \Imag \langle f,Af\rangle\geq 0
 \end{equation*}
 for all $f\in\mathcal{D}(A)$. An operator $\widetilde{A}$ is called {\bf{antidissipative}} if and only if $(-\widetilde{A})$ is dissipative.
 \end{definition}

  Note that we have defined the scalar product $\langle\cdot,\cdot\rangle$ to be antilinear in the first and linear in the second component. Also note that we require $A$ to be densely defined for it to be dissipative. Finally, let us remark that any operator $A$, which is dissipative in the above sense, is also closable with its closure $\overline{A}$ being dissipative as well \cite{Phillips59}.

\begin{definition}
A dissipative operator $A$ is said to be {\bf{maximally dissipative}} if for any dissipative operator extension $A\subset A'$ we get that $A=A'$. 
\end{definition}
Let us remark at this point that the distinction between $m$-dissipative and maximally dissipative operators as it can be found in the literature (cf. e.g. \cite[Sec.\ 3]{Elst} for accretive operators) is not needed if one only considers densely defined dissipative operators as they coincide for this case. 
The following result is a well known fact:
\begin{proposition}[{\cite[Theorems 1.1.1, 1.1.2 and 1.1.3]{Phillips59}}] Let $A$ be dissipative. Then, the following are equivalent:
\begin{itemize}
\item $A$ is maximally dissipative.
\item There exists a $\lambda\in\C$ with $\Imag (\lambda)<0$ such that $\lambda\in\rho(A)$, where $\rho(A)$ denotes the resolvent set of $A$.
\item $\C^-:=\{z\in\C:\Imag (z)<0\}\subset\rho(A)$.
\item $(-A^*)$ is dissipative.
\item $iA$ is the generator of a strongly continuous semigroup of contractions on $\mathcal{H}$.
\end{itemize} \label{thm:bally}
\end{proposition}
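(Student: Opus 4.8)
The plan is to prove the four equivalences $(1)\Leftrightarrow(2)$, $(2)\Leftrightarrow(3)$, $(3)\Leftrightarrow(4)$ and $(2)\Leftrightarrow(5)$, with essentially all the analytic content packed into one elementary identity and one concrete construction. The identity is that for dissipative $A$ and $\lambda=a-ib$ with $b>0$,
\[
\|(A-\lambda)f\|^2=\|(A-a)f\|^2+b^2\|f\|^2+2b\,\Imag\langle f,Af\rangle\ge b^2\|f\|^2,
\]
so that $A-\lambda$ is injective with $\|(A-\lambda)^{-1}\|\le|\Imag\lambda|^{-1}$ on its range; specialising to $\lambda=-ib$ and letting $b\to\infty$ shows conversely that any densely defined $B$ obeying $\|(B-\lambda)g\|\ge|\Imag\lambda|\,\|g\|$ for all $g$ and all $\lambda\in\C^-$ is dissipative. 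I would also note at the outset that a maximally dissipative operator is automatically closed (its closure is a dissipative extension), so closedness of $A$ may be assumed whenever convenient; this is also the natural hypothesis under which condition (4) is read.

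For $(2)\Leftrightarrow(3)$, one direction is immediate, and for $(2)\Rightarrow(3)$ I would argue that $\rho(A)\cap\C^-$ is open and, because the resolvent bound above is locally uniform, also closed in $\C^-$ (a Neumann-series argument), hence equals the connected set $\C^-$. For $(1)\Leftrightarrow(2)$, the easy half is $(2)\Rightarrow(1)$: if $\lambda\in\rho(A)\cap\C^-$ and $A\subset A'$ with $A'$ dissipative, then $A'-\lambda$ is injective and $\ran(A'-\lambda)\supseteq\ran(A-\lambda)=\mathcal H$, so it is a bijection onto $\mathcal H$ extending the bijection $A-\lambda\colon\mathcal D(A)\to\mathcal H$, which forces $\mathcal D(A')=\mathcal D(A)$. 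For $(3)\Leftrightarrow(4)$, assuming (3) I would dualise: $\|(A-\lambda)^{-1}\|\le|\Imag\lambda|^{-1}$ on $\C^-$ gives $\|(A^*-\mu)^{-1}\|\le|\Imag\mu|^{-1}$ on $\C^+$, i.e.\ $\|((-A^*)-\lambda)g\|\ge|\Imag\lambda|\,\|g\|$ for $\lambda\in\C^-$, whence $-A^*$ (densely defined since $A$ is closable) is dissipative by the converse half of the identity; conversely, dissipativity of $-A^*$ gives $\ker(A^*-\mu)=\{0\}$ for $\mu\in\C^+$, so $\ran(A-\overline\mu)$ is dense, and being closed (the identity applied to the closed operator $A$) it is all of $\mathcal H$, so $\overline\mu\in\rho(A)$ and (3) holds. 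Finally $(2)\Leftrightarrow(5)$ I would read off the Lumer--Phillips generation theorem: $iA$ generates a $C_0$-semigroup of contractions iff it is densely defined with $\Real\langle f,iAf\rangle\le0$ (i.e.\ $A$ dissipative, which is assumed) and $\ran(iA-\lambda)=\mathcal H$ for some $\lambda>0$; since $iA-\lambda=i(A+i\lambda)$ with $-i\lambda\in\C^-$, this is the same as $-i\lambda\in\rho(A)$, i.e.\ (2).

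The remaining implication $(1)\Rightarrow(2)$ is the one I expect to require actual work. The plan: assume $A$ maximally dissipative, hence closed, so $\ran(A+i)$ is closed by the identity; suppose for contradiction $\ran(A+i)\neq\mathcal H$ and pick $\phi\neq0$ in $\ran(A+i)^\perp=\ker(A^*-i)$. One first notes $\phi\notin\mathcal D(A)$, since otherwise $0=\langle\phi,(A+i)\phi\rangle$ would give $\Imag\langle\phi,A\phi\rangle=-\|\phi\|^2<0$; hence $\mathcal D(A)\dotplus\C\phi$ strictly enlarges $\mathcal D(A)$, and on it I would define $\widehat A$ by $\widehat A|_{\mathcal D(A)}=A$ and $\widehat A\phi=i\phi$. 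Using $\langle\phi,Af\rangle=\langle\phi,(A+i)f\rangle-i\langle\phi,f\rangle=-i\langle\phi,f\rangle$, a short computation gives for $h=f+c\phi$
\[
\Imag\langle h,\widehat A h\rangle=\Imag\langle f,Af\rangle+|c|^2\|\phi\|^2\ge0,
\]
so $\widehat A$ is a densely defined dissipative proper extension of $A$, contradicting maximality; therefore $\ran(A+i)=\mathcal H$, which together with injectivity and $\|(A+i)^{-1}\|\le1$ yields $-i\in\rho(A)$, i.e.\ (2). The genuinely non-formal point of the whole proof is exactly this rank-one enlargement: producing a concrete dissipative extension when $\ran(A+i)$ is a proper subspace, with the sign in the displayed identity being what keeps $\widehat A$ dissipative. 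Everything else is standard manipulation of adjoints, ranges and the resolvent set.
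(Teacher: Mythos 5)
The paper does not prove this proposition at all: it is quoted verbatim from Phillips (Theorems 1.1.1--1.1.3 of \cite{Phillips59}) and used as a black box, so there is no in-paper argument to compare against. Your proof is a correct, self-contained reconstruction of the classical result. The central identity $\|(A-\lambda)f\|^2=\|(A-a)f\|^2+b^2\|f\|^2+2b\,\Imag\langle f,Af\rangle$ is right (with the paper's convention that the inner product is antilinear in the first slot), and each of the four equivalences goes through: the connectedness argument for $(2)\Rightarrow(3)$, the bijection argument for $(2)\Rightarrow(1)$, the duality $\|(A^*-\bar\lambda)^{-1}\|=\|(A-\lambda)^{-1}\|$ for $(3)\Leftrightarrow(4)$, the Lumer--Phillips reduction for $(2)\Leftrightarrow(5)$, and, most importantly, the rank-one dissipative enlargement $\widehat A\phi=i\phi$ on $\ker(A^*-i)$ for $(1)\Rightarrow(2)$, whose dissipativity computation $\Imag\langle f+c\phi,\widehat A(f+c\phi)\rangle=\Imag\langle f,Af\rangle+|c|^2\|\phi\|^2$ I have checked. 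You were also right to flag the one genuinely delicate point, namely that conditions $(1)$, $(2)$, $(3)$ and $(5)$ each force $A$ to be closed while $(4)$ does not (it only sees $\overline A$), so the equivalence must be read for closed $A$; your explicit remark that maximal dissipativity implies closedness, and that $(4)$ is to be read under that hypothesis, is exactly the standard resolution and matches how the paper uses the result (always for closed or maximal operators).
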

Finally, let us state a lemma on by how many linearly independent vectors the domain of a given closed dissipative operator with finite defect index has to increase in order to obtain a maximally dissipative extension.  
\begin{lemma}[\cite{Crandall}] \label{prop:dampfnudel}
Let $A$ be a closed and dissipative linear operator on a separable Hilbert space $\mathcal{H}$ such that $\dim[\text{\emph{ran}}(A+i)]^\perp<\infty$. Moreover, let $A'$ be a dissipative extension of $A$. Then, $A'$ is maximally dissipative if and only if 
\begin{equation*}
\dim \mathcal{D}(A')/{\mathcal{D}(A)}=\dim [\text{\emph{ran}}(A+i)]^\perp\:.
\end{equation*}
\end{lemma}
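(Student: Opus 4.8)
The plan is to translate the condition on domains into a condition on the ranges of $A+i$ and $A'+i$. The only elementary input is that for any dissipative operator $B$ and $g\in\mathcal D(B)$ one has $\|(B+i)g\|^2=\|Bg\|^2+\|g\|^2+2\,\Imag\langle g,Bg\rangle\ge\|g\|^2$, so that $B+i$ is injective with bounded left inverse. Combined with Proposition~\ref{thm:bally} this yields the reformulation I will use throughout: a dissipative operator $B$ is maximally dissipative if and only if $\ran(B+i)=\mathcal H$. Indeed, if $B$ is maximally dissipative then $\C^-\subset\rho(B)$, so $-i\in\rho(B)$ and $B+i$ is onto; conversely, if $\ran(B+i)=\mathcal H$ then $(B+i)^{-1}$ is everywhere defined and bounded, hence closed, so $B$ is closed and $-i\in\rho(B)$, and Proposition~\ref{thm:bally} applies. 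Finally, since $A$ is closed, $A+i$ is closed and bounded below, so $\ran(A+i)$ is a closed subspace with $\dim[\ran(A+i)]^{\perp}=\dim \mathcal H/\ran(A+i)=:n<\infty$.

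Next I would exploit $A\subset A'$: then $\ran(A+i)\subset\ran(A'+i)\subset\mathcal H$, and $(A'+i)$ restricted to $\mathcal D(A)$ agrees with $A+i$ and hence maps $\mathcal D(A)$ onto $\ran(A+i)$. Therefore the assignment $f+\mathcal D(A)\mapsto (A'+i)f+\ran(A+i)$ defines a linear map
\[
\Psi\colon\ \mathcal D(A')/\mathcal D(A)\ \longrightarrow\ \ran(A'+i)/\ran(A+i),
\]
and the claim is that $\Psi$ is a vector-space isomorphism. Well-definedness and surjectivity are immediate; injectivity is where the dissipativity estimate re-enters: if $(A'+i)f\in\ran(A+i)$, pick $h\in\mathcal D(A)$ with $(A+i)h=(A'+i)f$, so that $(A'+i)(f-h)=0$ and hence $f=h\in\mathcal D(A)$. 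Consequently $\dim\mathcal D(A')/\mathcal D(A)=\dim\ran(A'+i)/\ran(A+i)$.

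The last step is purely finite-dimensional bookkeeping. If $M\subset V\subset\mathcal H$ with $M$ closed and $\dim\mathcal H/M=n<\infty$, then the inclusion-induced injection $V/M\hookrightarrow\mathcal H/M$ shows $\dim V/M\le n$, with equality precisely when that injection is onto, i.e.\ precisely when $V=\mathcal H$. Taking $M=\ran(A+i)$ and $V=\ran(A'+i)$ and feeding this through $\Psi$ gives $\dim\mathcal D(A')/\mathcal D(A)\le n$ in all cases, and $\dim\mathcal D(A')/\mathcal D(A)=n$ if and only if $\ran(A'+i)=\mathcal H$, which by the first step is equivalent to $A'$ being maximally dissipative. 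This is exactly the assertion of Lemma~\ref{prop:dampfnudel}.

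None of the steps is deep; the two spots that deserve care are (i) the equivalence ``maximally dissipative $\iff\ \ran(B+i)=\mathcal H$'' for a $B$ that is only assumed dissipative and not a priori closed — which is handled by the bound $\|(B+i)g\|\ge\|g\|$ forcing $(B+i)^{-1}$ to be bounded and everywhere defined, hence closed — and (ii) ensuring the quotient-space count in the final step is genuinely finite-dimensional, which is guaranteed by the closedness of $\ran(A+i)$ together with the hypothesis $\dim[\ran(A+i)]^{\perp}<\infty$.
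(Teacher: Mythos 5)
Your proof is correct and complete: the reduction of maximal dissipativity to $\ran(A'+i)=\mathcal H$ via the bound $\|(B+i)g\|\geq\|g\|$, the isomorphism $\mathcal D(A')/\mathcal D(A)\cong\ran(A'+i)/\ran(A+i)$, and the finite-dimensional count are all sound. The paper itself gives no proof of this lemma (it only cites Crandall--Phillips), and your argument is essentially the standard one from that source, so there is nothing further to compare.
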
 
\subsection{Dual pairs}
Let us introduce the notion of a dual pair of operators (see also \cite{LyantzeStorozh} for more details). Given a densely defined closable operator $A$, it is a well known fact that another densely defined closable operator $\widetilde{A}$ can always be found such that $(A,\widetilde{A})$ forms a dual pair as can be seen from the trivial choice $\widetilde{A}:=A^*$.
\begin{definition} Let $(A,\widetilde{A})$ be a pair of densely defined and closable operators. We say that they form a {\bf{dual pair}} if 
\begin{equation*}
A\subset \widetilde{A}^*\quad\text{resp.}\quad \widetilde{A}\subset A^*\:.
\end{equation*}
In this case, $A$ is called a {\bf{formal adjoint}} of $\widetilde{A}$ and vice versa.
\end{definition}
Dual pairs can be thought of as a pair consisting of a ``maximal" operator (in our notation $\widetilde{A}^*$) and a ``minimal" operator (here: $A$). In this sense, any extension of $A$ that is a restriction of $\widetilde{A}^*$ can be interpreted as preserving the formal action of $\widetilde{A}^*$:
\begin{definition}
Let $(A,\widetilde{A})$ be a dual pair. An operator $A'$ is said to be a {\bf{proper}} extension of the dual pair $(A,\widetilde{A})$ if
\begin{equation*}
A\subset A'\subset\widetilde{A}^*\quad\text{resp.}\quad \widetilde{A}\subset (A')^*\subset A^*\:.
\end{equation*}
\end{definition}
Let us quote two useful results on the existence of proper extensions of certain dual pairs. The first proposition guarantees the existence of a proper extension of a dual pair $(A,\widetilde{A})$ with $\lambda\in\widehat{\rho}(A)$ and $\overline{\lambda}\in\widehat{\rho}(\widetilde{A})$, where $\widehat{\rho}(A)$ denotes the field of regularity of the operator $A$ (for a definition see e.g.\ \cite{Weid}). This applies in particular if $A$ is dissipative, which means that $\C^-\subset\widehat{\rho}(A)$ and if $\widetilde{A}$ is antidissipative, which implies $\C^+\subset\widehat{\rho}(\widetilde{A})$.
\begin{proposition}[{\cite[Chapter  II, Lemma 1.1]{Grubb68}}] \label{thm:EEvM}
Let $(A,\widetilde{A})$ be a dual pair with $\lambda\in\widehat{\rho}(A)$ and $\overline{\lambda}\in\widehat{\rho}(\widetilde{A})$. Then there exists a proper extension $\widehat{A}$ of $(A,\widetilde{A})$ such that $\lambda\in\rho(\widehat{A})$ and $\mathcal{D}(\widetilde{A}^*)$ can be expressed as
\begin{equation} \label{eq:malamud}
\mathcal{D}(\widetilde{A}^*)=\mathcal{D}(\overline{A})\dot{+}(\widehat{A}-\lambda)^{-1}\ker(A^*-\overline{\lambda})\dot{+}\ker(\widetilde{A}^*-{\lambda})\:.
\end{equation}
Likewise, we get the following description for $\mathcal{D}(A^*)$:
\begin{equation*}
\mathcal{D}({A}^*)=\mathcal{D}(\overline{\widetilde{A}})\dot{+}(\widehat{A}^*-\overline{\lambda})^{-1}\ker(\widetilde{A}^*-\lambda)\dot{+}\ker({A}^*-\overline{{\lambda}})\:.
\end{equation*}
\end{proposition}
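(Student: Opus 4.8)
The plan is to split the statement into two parts: the \emph{existence} of a proper extension $\widehat A$ with $\lambda\in\rho(\widehat A)$, which contains all the substance, and the two \emph{domain decompositions}, which then follow by elementary linear algebra.

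\emph{Construction of $\widehat A$.} Since $(A,\widetilde A)$ is a dual pair we have $\overline A=A^{**}\subset\widetilde A^*$. From $\lambda\in\widehat\rho(A)$ one obtains that $\overline A-\lambda$ is injective with closed range $\mathcal R:=\ran(\overline A-\lambda)$, that $(\overline A-\lambda)^{-1}$ is bounded on $\mathcal R$, and that $\mathcal H=\mathcal R\oplus\ker(A^*-\overline\lambda)$, since $\mathcal R^\perp=\ker((\overline A-\lambda)^*)=\ker(A^*-\overline\lambda)$. The heart of the proof is to build a \emph{bounded linear right inverse} $\Gamma\colon\ker(A^*-\overline\lambda)\to\mathcal D(\widetilde A^*)$ of $\widetilde A^*-\lambda$. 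Given $\phi\in\ker(A^*-\overline\lambda)$, I would consider the bounded linear functional $(\widetilde A-\overline\lambda)v\mapsto\langle\phi,v\rangle$ on $\ran(\widetilde A-\overline\lambda)$; it is well defined because $\widetilde A-\overline\lambda$ is injective, and it has norm at most $c^{-1}\|\phi\|$ by the field-of-regularity estimate $\|(\widetilde A-\overline\lambda)v\|\geq c\|v\|$ coming from $\overline\lambda\in\widehat\rho(\widetilde A)$. Extending it by continuity to the closed subspace $\overline{\ran(\widetilde A-\overline\lambda)}$ and applying the Riesz representation theorem there produces a vector $\Gamma\phi\in\overline{\ran(\widetilde A-\overline\lambda)}$ with $\langle(\widetilde A-\overline\lambda)v,\Gamma\phi\rangle=\langle v,\phi\rangle$ for all $v\in\mathcal D(\widetilde A)$; this is precisely the statement that $\Gamma\phi\in\mathcal D(\widetilde A^*)$ with $(\widetilde A^*-\lambda)\Gamma\phi=\phi$, and $\|\Gamma\phi\|\leq c^{-1}\|\phi\|$. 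Uniqueness of the Riesz representative inside $\overline{\ran(\widetilde A-\overline\lambda)}$ makes $\phi\mapsto\Gamma\phi$ linear. I would then take $\widehat A$ to be the restriction of $\widetilde A^*$ to $\mathcal D(\overline A)\dot{+}\ran(\Gamma)$. Because $\mathcal R$ and $\ker(A^*-\overline\lambda)$ are mutually orthogonal, this is a genuine direct sum and $\widehat A-\lambda$ maps $\mathcal D(\widehat A)$ bijectively onto $\mathcal H=\mathcal R\oplus\ker(A^*-\overline\lambda)$, with inverse acting as $(\overline A-\lambda)^{-1}$ on $\mathcal R$ and as $\Gamma$ on $\ker(A^*-\overline\lambda)$; this inverse is bounded, so $\widehat A$ is closed, $A\subset\widehat A\subset\widetilde A^*$, and $\lambda\in\rho(\widehat A)$, with $(\widehat A-\lambda)^{-1}$ restricting to $\Gamma$ on $\ker(A^*-\overline\lambda)$.

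\emph{The domain formulas.} I would extract the following principle: whenever $\overline A\subset B\subset\widetilde A^*$ with $\lambda\in\rho(B)$ and $\lambda\in\widehat\rho(A)$, then $\mathcal D(\widetilde A^*)=\mathcal D(\overline A)\dot{+}(B-\lambda)^{-1}\ker(A^*-\overline\lambda)\dot{+}\ker(\widetilde A^*-\lambda)$. Indeed, for $u\in\mathcal D(\widetilde A^*)$ the vector $w:=(B-\lambda)^{-1}(\widetilde A^*-\lambda)u$ lies in $\mathcal D(B)$ and $u-w\in\ker(\widetilde A^*-\lambda)$, so $\mathcal D(\widetilde A^*)=\mathcal D(B)\dot{+}\ker(\widetilde A^*-\lambda)$; and for $v\in\mathcal D(B)$, splitting $(B-\lambda)v=\eta_1+\eta_2$ along $\mathcal H=\mathcal R\oplus\ker(A^*-\overline\lambda)$ and setting $f:=(\overline A-\lambda)^{-1}\eta_1\in\mathcal D(\overline A)$ gives $v-f=(B-\lambda)^{-1}\eta_2$ with $\eta_2\in\ker(A^*-\overline\lambda)$, so $\mathcal D(B)=\mathcal D(\overline A)\dot{+}(B-\lambda)^{-1}\ker(A^*-\overline\lambda)$. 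Directness in each step is immediate from injectivity of $B-\lambda$ together with the orthogonality $\mathcal R\perp\ker(A^*-\overline\lambda)$. Applying this with $B=\widehat A$, and recalling that $(\widehat A-\lambda)^{-1}$ restricts to $\Gamma$ on $\ker(A^*-\overline\lambda)$, gives the first displayed identity. For the second, observe that $(\widetilde A,A)$ is again a dual pair, with $\overline\lambda\in\widehat\rho(\widetilde A)$ and $\lambda\in\widehat\rho(A)$; that $\lambda\in\rho(\widehat A)$ forces $\overline\lambda\in\rho(\widehat A^*)$ with $(\widehat A^*-\overline\lambda)^{-1}=((\widehat A-\lambda)^{-1})^*$; and that $\overline{\widetilde A}=(\widetilde A^*)^*\subset\widehat A^*\subset A^*$. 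Applying the principle to the pair $(\widetilde A,A)$, the point $\overline\lambda$, and $B=\widehat A^*$ then yields exactly $\mathcal D(A^*)=\mathcal D(\overline{\widetilde A})\dot{+}(\widehat A^*-\overline\lambda)^{-1}\ker(\widetilde A^*-\lambda)\dot{+}\ker(A^*-\overline\lambda)$.

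The only genuinely non-routine step is the construction of $\Gamma$. The obstruction is that $\widetilde A^*-\lambda$ need not have closed range, so it cannot simply be inverted; the estimate built into $\overline\lambda\in\widehat\rho(\widetilde A)$ is precisely what lets one solve $(\widetilde A^*-\lambda)u=\phi$ \emph{boundedly} for $\phi$ ranging over the closed subspace $\ker(A^*-\overline\lambda)$. Everything else is bookkeeping with the orthogonal decomposition $\mathcal H=\ran(\overline A-\lambda)\oplus\ker(A^*-\overline\lambda)$ and with $\lambda\in\rho(\widehat A)$; in particular, all the directness claims reduce to that orthogonality, and choosing $\Gamma\phi$ in $\overline{\ran(\widetilde A-\overline\lambda)}$ additionally makes $\ran(\Gamma)$ orthogonal to $\ker(\widetilde A^*-\lambda)$, which further streamlines the triple-sum directness.
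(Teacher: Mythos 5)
The paper does not prove this proposition at all: it is imported verbatim from Grubb \cite[Ch.~II, Lemma 1.1]{Grubb68} as a known fact, so there is no in-paper argument to compare against. Your proof is correct and self-contained, and it follows what is essentially the classical route. The one genuinely substantive step is exactly where you locate it: the field-of-regularity estimate $\|(\widetilde A-\overline\lambda)v\|\geq c\|v\|$ turns the map $(\widetilde A-\overline\lambda)v\mapsto\langle v,\phi\rangle$ into a bounded functional, and Riesz representation on $\overline{\ran(\widetilde A-\overline\lambda)}$ produces the bounded right inverse $\Gamma$ of $\widetilde A^*-\lambda$ (your argument in fact shows $\widetilde A^*-\lambda$ is surjective onto all of $\mathcal H$, not only onto $\ker(A^*-\overline\lambda)$). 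The rest is the bookkeeping you describe with $\mathcal H=\ran(\overline A-\lambda)\oplus\ker(A^*-\overline\lambda)$, and your abstracted ``principle'' cleanly delivers both displayed decompositions, the second one via the duality swap $(A,\widetilde A,\lambda,\widehat A)\mapsto(\widetilde A,A,\overline\lambda,\widehat A^*)$, which is the right way to avoid redoing the construction. One small gloss: the directness of $\mathcal D(\overline A)\dot{+}\ran(\Gamma)$ is not literally ``because $\mathcal R\perp\ker(A^*-\overline\lambda)$''; you should apply $\widetilde A^*-\lambda$ to a putative common element, use that orthogonality to conclude the image vanishes, and then invoke injectivity of $\overline A-\lambda$ to kill the element itself --- but this is exactly the computation your later directness arguments carry out, so it is a presentational point rather than a gap.
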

The following proposition guarantuees the existence of a proper maximally dissipative extension for any dual pair $(A,\widetilde{A})$, where $A$ is dissipative and $\widetilde{A}$ is antidissipative. Up to a suitable multiplication by $i$, a proof for this can be found in \cite[Chapter IV, Proposition 4.2]{NagyFoias}.
\begin{proposition} \label{thm:friendly}
Let $(A,\widetilde{A})$ be a dual pair, where $A$ is dissipative and $\widetilde{A}$ is antidissipative. Then there exists a maximally dissipative proper extension of $(A,\widetilde{A})$.
\end{proposition}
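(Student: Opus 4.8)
The plan is to pass through the Cayley transform, which converts the assertion into a statement about contractive extensions of a partial contraction; in that form it is, up to a suitable multiplication by $i$, essentially \cite[Chapter IV, Proposition 4.2]{NagyFoias}. First replace $A$, $\widetilde{A}$ by their closures (harmless: $\overline{A}$ is still dissipative, $\overline{\widetilde{A}}$ still antidissipative, and $\widetilde{A}^{*}=\overline{\widetilde{A}}^{*}$ is closed and contains $\overline{A}$). Since $\Imag\langle f,\overline{A}f\rangle\ge 0$ one has $\|(\overline{A}+i)f\|^{2}=\|\overline{A}f\|^{2}+\|f\|^{2}+2\Imag\langle f,\overline{A}f\rangle\ge\|f\|^{2}$, so $\overline{A}+i$ is bounded below, $\mathcal{M}:=\ran(\overline{A}+i)$ is closed, and $V_{0}:=(\overline{A}-i)(\overline{A}+i)^{-1}$ is a contraction mapping $\mathcal{M}$ bijectively onto the closed subspace $\ran(\overline{A}-i)$, with $\ker(I-V_{0})=\{0\}$ and $\overline{A}=i(I+V_{0})(I-V_{0})^{-1}$ on $\mathcal{D}(\overline{A})=\ran(I-V_{0})$. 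Conversely, for any everywhere-defined contraction $V$ on $\mathcal{H}$ with $V\supset V_{0}$ and $\ker(I-V)=\{0\}$, the operator $\widehat{A}:=i(I+V)(I-V)^{-1}$ on $\mathcal{D}(\widehat{A})=\ran(I-V)$ is dissipative (a direct computation gives $\Imag\langle f,\widehat{A}f\rangle=\|u\|^{2}-\|Vu\|^{2}\ge0$ when $f=(I-V)u$), and $(\widehat{A}+i)^{-1}=\frac{1}{2i}(I-V)$ is bounded and everywhere defined, so $-i\in\rho(\widehat{A})$ and $\widehat{A}$ is maximally dissipative by Proposition \ref{thm:bally}; moreover $\overline{A}\subset\widehat{A}$, so $\mathcal{D}(\widehat{A})\supset\mathcal{D}(\overline{A})$ is automatically dense. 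The whole problem is therefore reduced to: produce one such $V$ that in addition satisfies $\widehat{A}\subset\widetilde{A}^{*}$.

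Applying the same computation to $-\widetilde{A}$, antidissipativity of $\widetilde{A}$ shows $\overline{\widetilde{A}}-i$ is bounded below and that $W_{0}:=(\overline{\widetilde{A}}+i)(\overline{\widetilde{A}}-i)^{-1}$ is a contraction mapping the closed subspace $\ran(\overline{\widetilde{A}}-i)$ bijectively onto $\ran(\overline{\widetilde{A}}+i)$, with $\ker(I-W_{0})=\{0\}$. A short computation with adjoints, using $\widehat{A}^{*}=-i(I+V^{*})(I-V^{*})^{-1}$ together with the fact that the Cayley transform is order preserving, shows that for $V$ as above the condition $\widehat{A}\subset\widetilde{A}^{*}$ is equivalent to $W_{0}\subset V^{*}$ (i.e.\ $V^{*}$ restricts to $W_{0}$ on $\ran(\overline{\widetilde{A}}-i)$). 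So the task is now: find a contraction $V$ on $\mathcal{H}$ with $V\supset V_{0}$, $V^{*}\supset W_{0}$ and $\ker(I-V)=\{0\}$. Writing out $A\subset\widetilde{A}^{*}$ in the Cayley picture, the dual-pair hypothesis is seen to be exactly the consistency identity $\langle V_{0}x,y\rangle=\langle x,W_{0}y\rangle$ for $x\in\mathcal{M}$, $y\in\ran(\overline{\widetilde{A}}-i)$, which is the natural necessary condition for $V_{0}$ and $W_{0}$ to admit a common extension of this kind.

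The main obstacle is precisely this last existence claim: assembling the partial data $(V_{0},W_{0})$ into a single full contraction $V$ on $\mathcal{H}$ that extends $V_{0}$, whose adjoint extends $W_{0}$, and with $I-V$ injective. This is, up to a suitable multiplication by $i$ matching Proposition \ref{thm:bally}'s characterization of maximal dissipativity, the content of \cite[Chapter IV, Proposition 4.2]{NagyFoias}. Alternatively one can construct $V$ by hand, in block form with respect to $\mathcal{H}=\mathcal{M}\oplus\mathcal{M}^{\perp}$ and $\mathcal{H}=\ran(\overline{A}-i)\oplus\ran(\overline{A}-i)^{\perp}$, where $\mathcal{M}^{\perp}=\ker(A^{*}-i)$ and $\ran(\overline{A}-i)^{\perp}=\ker(A^{*}+i)$: the consistency identity ensures the block matrix can be completed to a contraction whose adjoint extends $W_{0}$, and one then verifies $\ker(I-V)=\{0\}$, cross-checking, if desired, against the domain decomposition of Proposition \ref{thm:EEvM} taken at $\lambda=-i$. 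Undoing the Cayley transform, $\widehat{A}:=i(I+V)(I-V)^{-1}$ is maximally dissipative with $A\subset\overline{A}\subset\widehat{A}\subset\widetilde{A}^{*}$, i.e.\ a maximally dissipative proper extension of the dual pair $(A,\widetilde{A})$.
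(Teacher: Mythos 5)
Your proposal is correct and follows essentially the same route as the paper: the paper offers no independent argument for this proposition but simply refers, ``up to a suitable multiplication by $i$'', to \cite[Chapter IV, Proposition 4.2]{NagyFoias}, and your Cayley-transform reduction is exactly the translation needed to invoke that result. The only caveats are cosmetic --- for a general dissipative $A$ the subspace $\ran(\overline{A}-i)$ need not be closed and $V_0$ need not be injective, so in the block-completion sketch one should decompose against $\overline{\ran(\overline{A}-i)}$ --- but neither point affects the argument, whose essential existence step is in any case delegated to the same citation the paper relies on.
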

Finally, let us introduce some convenient notation for complementary subspaces:
\begin{definition} Let $\mathcal{N}$ be a (not necessarily closed) linear space and $\mathcal{M}\subset\mathcal{N}$ be a (not necessarily closed) subspace. With the notation $\mathcal{N}//\mathcal{M}$ we mean any subspace of $\mathcal{N}$, which is complementary to $\mathcal{M}$, i.e.
\begin{equation*}
(\mathcal{N}//\mathcal{M})+\mathcal{M}=\mathcal{N}\quad\text{and}\quad(\mathcal{N}//\mathcal{M})\cap\mathcal{M}=\{0\}\:.
\end{equation*}
\end{definition}
\section{The common core property}

In many situations (including all of the examples that we are going to discuss in this paper) one considers dual pairs of operators, which are constructed by firstly defining them on a common core like, e.g. the compactly supported smooth functions, and then taking closures:
\begin{definition}
Let $(A,\widetilde{A})$ be a dual pair of closed operators. We say that it has the {\bf{common core property}} if $\overline{A\upharpoonright_{\mathcal{D}(A)\cap\mathcal{D}(\widetilde{A})}}=A$ and $\widetilde{A}=\overline{\widetilde{A}\upharpoonright_{\mathcal{D}(A)\cap\mathcal{D}(\widetilde{A})}}$.
\end{definition}
\begin{example}
Consider the dissipative momentum operator $T$ given by
\begin{align*}
T:\qquad\mathcal{D}(T)&=\{f\in H^1(0,1), f(0)=\rho f(1)\}, \quad
f\mapsto if'\:,
\end{align*}
where $|\rho|<1$. Here, $f'$ denotes the weak derivative of $f$. Its adjoint $T^*$ is given by
\begin{align*}
T^*:\qquad\mathcal{D}(T^*)=\{f\in H^1(0,1),\overline{\rho} f(0)= f(1)\},\quad
f\mapsto if'\:.
\end{align*}
Clearly, $(T,T^*)$ is a dual pair. However, since $\mathcal{D}:=\mathcal{D}(T)\cap\mathcal{D}(T^*)=\{f\in H^1(0,1), f(0)=f(1)=0\}$, this dual pair does not have the common core property, as $S:=\overline{T\upharpoonright_\mathcal{D}}$ is symmetric and a proper restriction of $T$.\\
More generally, let $S$ be a closed and symmetric (in particular densely defined) operator. Moreover, let $S'$ be any closed (not necessarily symmetric) extension of $S$ such that $S\subset S'\subset S^*$. This readily implies that $(S,S')$ is a dual pair. However, since $\mathcal{D}(S)\cap\mathcal{D}(S')=\mathcal{D}(S)$, we get $S=\overline{S'\upharpoonright_{\mathcal{D}(S)\cap\mathcal{D}(S')}}$. Thus, the only dual pair of this form, which has the common core property is $(S,S)$. Moreover, let $V\geq 0$ be $S^*$-bounded with $S^*$-bound less than $1$, which implies in particular that $V$ is $S'$-bounded with $S'$-bound less than $1$ (for a definition of relative boundedness, see e.g.\ \cite{Kato}). By the Hess-Kato Theorem \cite[Corollary 1]{Hess-Kato}, we have that $(S'+iV)^*=S'^*-iV\subset S^*-iV$. This implies again that any pair of the form $(S+iV,S'-iV)$ is a dual pair. However, again we have that the only dual pair which has the common core property is $(S+iV,S-iV)$.
\end{example}
The following lemma shows in particular that if we have a dual pair $(A,\widetilde{A})$ that has the common core property, where $A$ is dissipative, one can conclude that $\widetilde{A}$ is antidissipative.

\begin{lemma} Let $(A,\widetilde{A})$ be a dual pair of closed operators, which has the common core property. Moreover, let 
$\mathcal{N}_A:=\{\langle f,Af\rangle:f\in\mathcal{D}(A), \|f\|=1\}$ be the numerical range of $A$ and let $\mathcal{N}^*_{\widetilde{A}}:=\{{\overline{\langle f,\widetilde{A}f\rangle}}:f\in\mathcal{D}(\widetilde{A}), \|f\|=1\}$ be the complex conjugate of the numerical range of $\widetilde{A}$. Then, the closures of the numerical range of $A$ and the complex conjugate of the numerical range of $\widetilde{A}$ coincide:
\vspace{-2mm}
\begin{equation*}
\overline{\mathcal{N}_A}=\overline{\mathcal{N}^*_{\widetilde{A}}}\:.
\end{equation*}
\label{coro:keks}
\end{lemma}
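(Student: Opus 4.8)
The plan is to exploit the common core property to transfer information between $A$ and $\widetilde{A}$ by working on the dense common domain $\mathcal{D} := \mathcal{D}(A)\cap\mathcal{D}(\widetilde{A})$. The key observation is that for $f \in \mathcal{D}$ we have $f \in \mathcal{D}(A) \subset \mathcal{D}(\widetilde{A}^*)$, and by the dual pair relation $\widetilde{A}^*f = $ (the closure's action) while $f \in \mathcal{D}(\widetilde{A})$ as well; the two formal actions are linked via $\langle g, Af\rangle = \langle \widetilde{A}g, f\rangle$ for $g \in \mathcal{D}(\widetilde{A})$. Taking $g = f \in \mathcal{D}$ gives $\langle f, Af\rangle = \langle \widetilde{A}f, f\rangle = \overline{\langle f, \widetilde{A}f\rangle}$ for every $f \in \mathcal{D}$ with $\|f\| = 1$. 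Hence, writing $\mathcal{N}_{A\upharpoonright\mathcal{D}}$ and $\mathcal{N}_{\widetilde{A}\upharpoonright\mathcal{D}}^*$ for the numerical range of the restrictions, we immediately get the \emph{equality} (not just closure) $\mathcal{N}_{A\upharpoonright\mathcal{D}} = \mathcal{N}_{\widetilde{A}\upharpoonright\mathcal{D}}^*$.

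Next I would show that restricting to the common core does not shrink the closure of the numerical range, i.e. $\overline{\mathcal{N}_{A\upharpoonright\mathcal{D}}} = \overline{\mathcal{N}_A}$, and likewise for $\widetilde{A}$. This is where the common core property is essential: given $f \in \mathcal{D}(A)$ with $\|f\| = 1$, by hypothesis there is a sequence $f_n \in \mathcal{D}$ with $f_n \to f$ and $Af_n \to Af$; normalizing $f_n/\|f_n\| \to f$ (legitimate since $\|f_n\| \to 1 \neq 0$) and using joint continuity of the inner product, $\langle f_n/\|f_n\|, A(f_n/\|f_n\|)\rangle \to \langle f, Af\rangle$. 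Thus every point of $\mathcal{N}_A$ is a limit of points of $\mathcal{N}_{A\upharpoonright\mathcal{D}}$, so $\mathcal{N}_A \subseteq \overline{\mathcal{N}_{A\upharpoonright\mathcal{D}}}$; the reverse inclusion of closures is trivial since $\mathcal{D} \subseteq \mathcal{D}(A)$. The identical argument with $\widetilde{A}$ (and noting complex conjugation is a homeomorphism of $\C$, so it commutes with closure) gives $\overline{\mathcal{N}_{\widetilde{A}}^*} = \overline{\mathcal{N}_{\widetilde{A}\upharpoonright\mathcal{D}}^*}$.

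Combining the two steps:
\begin{equation*}
\overline{\mathcal{N}_A} = \overline{\mathcal{N}_{A\upharpoonright\mathcal{D}}} = \overline{\mathcal{N}_{\widetilde{A}\upharpoonright\mathcal{D}}^*} = \overline{\mathcal{N}_{\widetilde{A}}^*}\:,
\end{equation*}
which is the claim. The main (and really only) obstacle is the second step — verifying that passing to the common core is harmless for the closure of the numerical range — and the single subtlety there is the normalization: one must check that the approximating sequence from the common core property, once divided by its norm, still converges to $f$ and still has the right image under the operator, which follows because $\|f_n\| \to \|f\| = 1$ and the operators act continuously along these sequences by the definition of common core. The remark that the \emph{unclosed} numerical ranges of the restrictions coincide exactly is worth stating, since it is slightly stronger than what the lemma asserts and makes the dissipativity consequence (if $A$ is dissipative then $\mathcal{N}_A \subseteq \overline{\C^+}$, hence $\overline{\mathcal{N}_{\widetilde{A}}^*} \subseteq \overline{\C^+}$, i.e. $\widetilde{A}$ is antidissipative) completely transparent.
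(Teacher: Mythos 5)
Your proposal is correct and follows essentially the same route as the paper: both rest on the identity $\langle f,Af\rangle=\overline{\langle f,\widetilde{A}f\rangle}$ for $f$ in the common core (coming from $A\subset\widetilde{A}^*$) together with approximation of arbitrary domain elements by normalized sequences from the core. Your explicit handling of the normalization $f_n/\|f_n\|$ is a detail the paper glosses over, but the argument is the same.
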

\vspace{-8mm}
\begin{proof} Let $f\in\mathcal{D}(A)$ be normalized. Since $\mathcal{D}(A)\cap\mathcal{D}(\widetilde{A})$ is a core for $A$, there exists a normalized sequence $\{f_n\}_n\subset \mathcal{D}(A)\cap\mathcal{D}(\widetilde{A})$ such that $f_n\rightarrow f$ and $Af_n\rightarrow Af$ for $n\rightarrow\infty$. Using that $\langle f_n, A f_n\rangle=\overline{\langle f_n,\widetilde{A}f_n\rangle}$, we get that
\begin{equation*}
\lim_{n\rightarrow\infty}\overline{\langle f_n,\widetilde{A} f_n\rangle}=\lim_{n\rightarrow\infty}{\langle f_n,Af_n\rangle}={\langle f,Af\rangle}\:.
\end{equation*}
Since $\{\overline{\langle f_n,\widetilde{A}f_n\rangle}\}_n$ is a sequence of elements in $\mathcal{N}^*_{\widetilde{A}}$, we get that ${\langle f,Af\rangle}$ is a limit point of $\mathcal{N}^*_{\widetilde{A}}$, which means that
\begin{equation*}
{\mathcal{N}_A}\subset\overline{\mathcal{N}^*_{\widetilde{A}}}\:.
\end{equation*}
By similar reasoning, we get that
\begin{equation*}
\mathcal{N}^*_{\widetilde{A}}\subset\overline{\mathcal{N}_A},
\end{equation*}
which -- after taking closures -- yields the lemma.
\end{proof}

\begin{remark} If $A$ is closed and dissipative and $\mathcal{D}(A)\cap\mathcal{D}(A^*)$ is a core for $A$, i.e. $A=\overline{A\upharpoonright_{\mathcal{D}(A)\cap\mathcal{D}(A^*)}}$, we can define $\widetilde{A}:=\overline{A^*\upharpoonright_{\mathcal{D}(A)\cap\mathcal{D}(A^*)}}$, to construct a dual pair $(A, \widetilde{A})$, which has the common core property. This is in particular possible for the case that $\mathcal{D}(A)\subset\mathcal{D}(A^*)$ (cf. \cite[Corollary to Proposition IV, 4.2]{NagyFoias}).
\end{remark}
\section{The main theorem}
In this section, we will prove the main result, which can be written in a particularly nice form, if the common core property is satisfied. 
As any dissipative operator is closable with its closure being dissipative as well, it is necessary and sufficient to check dissipativity of an operator restricted to a core.
\begin{lemma} Let $A$ be a closed, densely defined operator and let $\mathcal{C}\subset\mathcal{H}$ be a core for $A$. Moreover, assume that $B$ is an extension of $A$, i.e. $A\subset B$ and $\mathcal{D}(B)=\mathcal{D}(A)\dot{+}\mathcal{M}$. Then, $\mathcal{C}\dot{+}\mathcal{M}$ is a core for $B$. \label{lemma:stollen}
\end{lemma}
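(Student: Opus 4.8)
The plan is to argue entirely at the level of graphs. First I would record the harmless preliminaries: since $\mathcal{C}\subseteq\mathcal{D}(A)$ and $\mathcal{D}(A)\cap\mathcal{M}=\{0\}$ (the latter because $\mathcal{D}(B)=\mathcal{D}(A)\dot{+}\mathcal{M}$ is a direct sum), the sum $\mathcal{C}\dot{+}\mathcal{M}$ is indeed direct and is contained in $\mathcal{D}(B)$, so $B\upharpoonright_{\mathcal{C}\dot{+}\mathcal{M}}$ is a restriction of the (closed) operator $B$; hence $\overline{B\upharpoonright_{\mathcal{C}\dot{+}\mathcal{M}}}\subseteq B$ is automatic. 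The real content is the reverse inclusion $B\subseteq\overline{B\upharpoonright_{\mathcal{C}\dot{+}\mathcal{M}}}$, i.e.\ that every $h\in\mathcal{D}(B)$, together with $Bh$, is a graph-norm limit of elements of $\mathcal{C}\dot{+}\mathcal{M}$.

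To prove this, take $h\in\mathcal{D}(B)$ and use the direct-sum decomposition of $\mathcal{D}(B)$ to write $h=f+m$ with $f\in\mathcal{D}(A)$ and $m\in\mathcal{M}$. Since $\mathcal{C}$ is a core for $A$, choose $f_n\in\mathcal{C}$ with $f_n\to f$ and $Af_n\to Af$, and set $h_n:=f_n+m\in\mathcal{C}\dot{+}\mathcal{M}$. Then $h_n\to h$, and, using $A\subset B$ and linearity of $B$, $Bh_n=Af_n+Bm\to Af+Bm=Bf+Bm=Bh$. Thus $(h,Bh)$ lies in the closure of the graph of $B\upharpoonright_{\mathcal{C}\dot{+}\mathcal{M}}$. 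Combining the two inclusions and closing gives $\overline{B\upharpoonright_{\mathcal{C}\dot{+}\mathcal{M}}}=B$ (equivalently $=\overline{B}$ if one only assumes $B$ closable, which is the situation in the applications, where $B$ is dissipative), which is exactly the assertion that $\mathcal{C}\dot{+}\mathcal{M}$ is a core for $B$.

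There is essentially no genuine obstacle; the one point requiring care is the choice of the approximating sequence. The natural move is to perturb only the $\mathcal{D}(A)$-component $f$ of $h$ while leaving the $\mathcal{M}$-component $m$ completely untouched — this is what lets us invoke the core property of $\mathcal{C}$ directly and avoids any hypothesis (such as finite dimensionality, closedness, or relative boundedness) on $\mathcal{M}$. As a by-product the same computation shows that $B\upharpoonright_{\mathcal{C}\dot{+}\mathcal{M}}$ and $B$ have the same graph closure, so $B\upharpoonright_{\mathcal{C}\dot{+}\mathcal{M}}$ is closable precisely when $B$ is.
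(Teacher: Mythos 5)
Your proof is correct and follows essentially the same argument as the paper: decompose $h=f+m$, approximate only the $\mathcal{D}(A)$-component by a sequence in $\mathcal{C}$ using the core property, and keep $m$ fixed so that $B(f_n+m)=Af_n+Bm\to Bh$. The additional remarks on directness of the sum and on graph closures are harmless elaborations of the same idea.
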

\begin{proof}
Since $\mathcal{C}$ is a core for $A$, this means that for every $f\in\mathcal{D}(A)$ there exists a sequence $\{f_n\}_n\subset\mathcal{C}$ such that $f_n\rightarrow f$ and $Af_n\rightarrow Af$ and therefore for any element of $\mathcal{D}(B)\ni (f+ m)$, where $f \in\mathcal{D}(A)$ and $m\in\mathcal{M}$ we get
\begin{equation*} 
(f_n+ m) \rightarrow (f+ m) \quad\text{and}\quad B(f_n+m)=(Af_n+ Bm)\rightarrow (Af+Bm)=B(f+m)\:,
\end{equation*}
which is the desired result.
\end{proof}
For the following results, let us recall the definition of the Kre\u\i n-von Neumann extension of a symmetric non-negative operator:
\begin{definition}
Let $V$ be symmetric and non-negative operator, i.e. $\langle f,V f\rangle\geq 0$ for all $f\in\mathcal{D}(V)$. Then, the {\bf{Kre\u\i n-von Neumann extension}} of $V$, which we denote by $V_K$, is the smallest non-negative selfadjoint extension of $V$, i.e.
for any $\widehat{V}=\widehat{V}^*$ with $V\subset \widehat{V}$ and $\widehat{V}\geq 0$ we have that
\begin{equation*}
0\leq V_K\leq \widehat{V}\:.
\end{equation*} 
It is a well known fact that such an extension $V_K$ always exists and that it is unique (cf. \cite{Krein}).

 (Recall that for two non-negative selfadjoint operators $A$ and $B$ on a Hilbert space $\mathcal{H}$, the relation $A\leq B$ is defined as
\begin{equation*}
A\leq B :\Leftrightarrow \mathcal{D}(A^{1/2})\supset\mathcal{D}(B^{1/2})\:\:\text{and}\:\:\|A^{1/2}f\|\leq\|B^{1/2}f\|
\end{equation*}
for all $f\in\mathcal{D}(B^{1/2})$.)\\
For the special case that $V$ is strictly positive, i.e. there exists an $\varepsilon>0$ such $\langle f,Vf\rangle\geq\varepsilon\|f\|^2$ for all $f\in\mathcal{D}(V)$, we have the following characterization of $V_K$ \cite{Alonso-Simon}:
\begin{align*}
V_K:\quad\mathcal{D}(V_K)=\mathcal{D}(V)\dot{+}\ker V^*,\quad
V_K=V^*\upharpoonright_{\mathcal{D}(V_K)}
\end{align*}
and for $V_K^{1/2}$ we get
\begin{align} \label{eq:kreinformdomain}
V_K^{1/2}:\qquad\qquad\mathcal{D}(V_K^{1/2})=\mathcal{D}(V_F^{1/2})&\dot{+}\ker V^*\notag\\
\langle V_K^{1/2}(f+k),V_K^{1/2}(f+k)\rangle&=\langle V_F^{1/2}f,V_F^{1/2}f\rangle\:,
\end{align}
with $f\in\mathcal{D}(V_F^{1/2})$, where $V_F$ is the Friedrichs extension of $V$ and $k\in\ker V^*$.
\end{definition}
For the proof of the main theorem without having to assume that the imaginary part is strictly positive, we will make use of an equivalent description for non-negative $V_K^{1/2}$ proved by Ando and Nishio.
\begin{proposition}[T. Ando, K. Nishio, {\cite[Thm.\ 1]{Ando-Nishio}}]
Let $V$ be a non-negative closed symmetric operator.  The selfadjoint and non-negative square-root of the Kre\u\i n-von Neumann extension of $V$, which we denote by $V_K^{1/2}$ can be characterized as follows:
\begin{align*}
\mathcal{D}(V_K^{1/2})=\left\{h\in\mathcal{H}: \sup_{f\in\mathcal{D}(V):Vf\neq 0}\frac{|\langle h,Vf\rangle|^2}{\langle f,Vf\rangle}<\infty\right\}\:,\\
\text{for any}\:\: h\in\mathcal{D}(V_K^{1/2}):\quad\|V_K^{1/2}h\|^2= \sup_{f\in\mathcal{D}(V):Vf\neq 0}\frac{|\langle h,Vf\rangle|^2}{\langle f,Vf\rangle}\:.
\end{align*} \label{thm:nett}
\end{proposition}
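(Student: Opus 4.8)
The plan is to realise the quantity $S(h):=\sup_{f\in\mathcal D(V),\,Vf\neq 0}\frac{|\langle h,Vf\rangle|^2}{\langle f,Vf\rangle}$ as the quadratic form of a concretely constructed non-negative self-adjoint extension of $V$, and then to identify that extension with $V_K$ through its minimality. First I would record the easy half: if $\widehat V$ is \emph{any} non-negative self-adjoint extension of $V$ and $h\in\mathcal D(\widehat V^{1/2})$, then for $f\in\mathcal D(V)\subset\mathcal D(\widehat V)$ one has $\langle h,Vf\rangle=\langle h,\widehat V f\rangle=\langle\widehat V^{1/2}h,\widehat V^{1/2}f\rangle$, so Cauchy--Schwarz together with $\|\widehat V^{1/2}f\|^2=\langle f,Vf\rangle$ gives $\frac{|\langle h,Vf\rangle|^2}{\langle f,Vf\rangle}\le\|\widehat V^{1/2}h\|^2$; hence $\mathcal D(\widehat V^{1/2})\subset\{h:S(h)<\infty\}$ and $S(h)\le\|\widehat V^{1/2}h\|^2$ there. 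Applied to $\widehat V=V_K$ this already yields one inclusion and one inequality in the statement.

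For the converse I would introduce the auxiliary Hilbert space $\mathfrak H_V$ obtained by completing $\mathcal D(V)$ with respect to $\langle f,g\rangle_V:=\langle f,Vg\rangle$ after factoring out $\{g:\langle g,Vg\rangle=0\}=\ker V$, writing $\hat f\in\mathfrak H_V$ for the image of $f\in\mathcal D(V)$, so that the $\hat f$ are dense in $\mathfrak H_V$. For $h\in\mathcal H$ the assignment $\hat f\mapsto\langle h,Vf\rangle$ is a well-defined linear functional on this dense subspace (it depends only on $Vf$, hence only on $\hat f$), and it is bounded precisely when $S(h)<\infty$, with squared norm equal to $S(h)$. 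By the Riesz representation theorem, to each $h$ with $S(h)<\infty$ corresponds a unique $\xi_h\in\mathfrak H_V$ with $\langle h,Vf\rangle=\langle\xi_h,\hat f\rangle_V$ for all $f\in\mathcal D(V)$ and $\|\xi_h\|_V^2=S(h)$. Setting $\mathcal D(\mathfrak q):=\{h:S(h)<\infty\}$ and $\mathfrak q(h,g):=\langle\xi_h,\xi_g\rangle_V$ defines a non-negative symmetric sesquilinear form with $\mathfrak q(h,h)=S(h)$. One checks $\mathcal D(V)\subset\mathcal D(\mathfrak q)$ (for $g\in\mathcal D(V)$ the functional is $\hat f\mapsto\langle\hat g,\hat f\rangle_V$, so $\xi_g=\hat g$ and $\mathfrak q(g,g)=\langle g,Vg\rangle$) and $\ker V^*\subset\mathcal D(\mathfrak q)$ with $S(k)=0$, so $\mathfrak q$ is densely defined; and $\mathfrak q$ is closed, because if $h_n\to h$ in $\mathcal H$ and $\|\xi_{h_n}-\xi_{h_m}\|_V\to 0$, then $\xi_{h_n}\to\xi$ in $\mathfrak H_V$ by completeness, and passing to the limit in $\langle h_n,Vf\rangle=\langle\xi_{h_n},\hat f\rangle_V$ gives $\langle h,Vf\rangle=\langle\xi,\hat f\rangle_V$, whence $h\in\mathcal D(\mathfrak q)$, $\xi_h=\xi$, and $\mathfrak q(h_n-h,h_n-h)\to 0$.

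Let $W$ be the non-negative self-adjoint operator associated with the closed form $\mathfrak q$ via the first representation theorem for closed non-negative forms (see, e.g., \cite{Kato}), so that $\mathcal D(W^{1/2})=\mathcal D(\mathfrak q)=\{h:S(h)<\infty\}$ and $\|W^{1/2}h\|^2=\mathfrak q(h,h)=S(h)$. Since for $g\in\mathcal D(V)$ and every $h\in\mathcal D(\mathfrak q)$ one has $\mathfrak q(h,g)=\langle\xi_h,\hat g\rangle_V=\langle h,Vg\rangle$, the representation theorem gives $g\in\mathcal D(W)$ and $Wg=Vg$, so $W$ is a non-negative self-adjoint extension of $V$. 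Finally, the easy half applied to an arbitrary non-negative self-adjoint extension $\widehat V$ of $V$ shows $\mathcal D(\widehat V^{1/2})\subset\mathcal D(W^{1/2})$ and $\|W^{1/2}h\|\le\|\widehat V^{1/2}h\|$ for $h\in\mathcal D(\widehat V^{1/2})$, i.e.\ $W\le\widehat V$. By the defining property (and uniqueness) of the Kre\u\i n--von Neumann extension this forces $W=V_K$, and the two displayed formulas follow. I expect the main obstacle to be precisely this middle step: the bare supremum $S(h)$ is only visibly a seminorm, and the point is that passing through the completion $\mathfrak H_V$ and the Riesz lemma exhibits it as a genuine densely defined closed non-negative quadratic form, after which the identification $W=V_K$ is a soft minimality argument. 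A minor technical point to handle with care is that $\mathfrak H_V$ need not embed into $\mathcal H$, so all manipulations must be carried out intrinsically in $\mathfrak H_V$; this causes no real difficulty, since only the pairings $\langle\xi_h,\hat f\rangle_V$ against images of elements of $\mathcal D(V)$ are ever used.
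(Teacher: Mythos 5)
Your argument is correct. Note that the paper does not prove this proposition at all: it is quoted verbatim from Ando--Nishio (their Theorem 1), with only a remark inserted about the normalisation $Vf\neq 0$ in the supremum. So there is no in-paper proof to compare against; what you have written is essentially a reconstruction of the original Ando--Nishio argument, namely: (i) the ``easy'' Cauchy--Schwarz bound $S(h)\le\|\widehat V^{1/2}h\|^2$ valid for every non-negative selfadjoint extension $\widehat V$, and (ii) the realisation of $S$ as the closed non-negative form of a concrete selfadjoint extension $W$ built from the completion $\mathfrak H_V$ of $\mathcal D(V)/\ker V$ in the $V$-inner product, followed by the minimality argument $W\le\widehat V$ for all $\widehat V$, which pins down $W=V_K$. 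All the delicate points are handled: you correctly observe that $\langle g,Vg\rangle=0$ forces $Vg=0$ (so the quotient and the division by $\langle f,Vf\rangle$ are legitimate, which is exactly the issue behind the paper's remark on the condition $Vf\neq0$), that $\mathfrak H_V$ need not embed in $\mathcal H$ but is only ever paired against classes $\hat f$ with $f\in\mathcal D(V)$, and that closedness of $\mathfrak q$ follows from completeness of $\mathfrak H_V$ plus uniqueness of the Riesz representative. The only step left implicit is that $W\le V_K$ together with $V_K\le W$ yields $W=V_K$; with the paper's form-theoretic definition of the order this is the statement that equality of the square-root form domains and norms forces equality of the operators, which is immediate from the representation theorem and worth one sentence.
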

\begin{remark} We draw the reader's attention to a slight difference in the way, Proposition \ref{thm:nett} was stated in \cite{Ando-Nishio}, where the supremum is taken over all $f\in\mathcal{D}(V)$ (without the extra condition that $Vf\neq 0$), which only makes sense if one assumes that $\ker V=\{0\}$. The extra condition $Vf\neq 0$ is a remedy for this problem and is a direct result from the reasoning of \cite{Ando-Nishio}.
\end{remark}
For our main theorem, we will make use of the fact that the dual pair under consideration has a common core $\mathcal{D}$, allowing us to define an ``imaginary part" on $\mathcal{D}$. It will therefore be helpful to show that the supremum in Proposition \ref{thm:nett} has to be taken only over $\mathcal{D}$.
\begin{lemma} Let $V$ be a non-negative closed symmetric operator and $\mathcal{C}$ be a core for $V$. Then, for any $h\in\mathcal{H}$ we have that 
\begin{equation*}
 \sup_{f\in\mathcal{D}(V):Vf\neq 0}\frac{|\langle h,Vf\rangle|^2}{\langle f,Vf\rangle}= \sup_{f\in\mathcal{C}: Vf\neq 0}\frac{|\langle h,Vf\rangle|^2}{\langle f,Vf\rangle}\:.
\end{equation*} \label{lemma:coresup}
\end{lemma}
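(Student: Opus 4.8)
The plan is to prove the two inequalities between the suprema separately. One direction is immediate: since $\mathcal{C}\subset\mathcal{D}(V)$, the supremum over $\{f\in\mathcal{C}:Vf\neq 0\}$ is taken over a (possibly smaller) set, so it is at most the supremum over all of $\{f\in\mathcal{D}(V):Vf\neq 0\}$. The content of the lemma is therefore the reverse inequality, and the main obstacle is a subtle one: given $f\in\mathcal{D}(V)$ with $Vf\neq 0$, a core-approximating sequence $f_n\in\mathcal{C}$ with $f_n\to f$ and $Vf_n\to Vf$ need not satisfy $Vf_n\neq 0$ for all $n$; however, since $Vf\neq 0$ and $Vf_n\to Vf$, we have $Vf_n\neq 0$ for all sufficiently large $n$, so after discarding finitely many terms we may assume $Vf_n\neq 0$ throughout. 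This is the only place where the condition $Vf\neq 0$ (rather than $f\neq 0$) matters, and it is exactly the reason the authors inserted it into Ando--Nishio's statement.

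With that observation in hand, I would fix $h\in\mathcal{H}$ and $f\in\mathcal{D}(V)$ with $Vf\neq 0$, and pick the sequence $\{f_n\}\subset\mathcal{C}$ as above with $Vf_n\neq 0$ for all $n$. Then $\langle h,Vf_n\rangle\to\langle h,Vf\rangle$ by continuity of the inner product, and $\langle f_n,Vf_n\rangle\to\langle f,Vf\rangle$ (writing $\langle f_n,Vf_n\rangle-\langle f,Vf\rangle=\langle f_n-f,Vf_n\rangle+\langle f,Vf_n-Vf\rangle$ and using that $\{f_n\}$ and $\{Vf_n\}$ are bounded). Since $\langle f,Vf\rangle>0$ (strictly, because $V$ is non-negative and $Vf\neq 0$ would force $\langle f,Vf\rangle=0$ only if $V^{1/2}f=0$, i.e. $Vf=0$, a contradiction — so in fact $\langle f,Vf\rangle>0$), the quotient is continuous at the limit and
\[
\frac{|\langle h,Vf\rangle|^2}{\langle f,Vf\rangle}=\lim_{n\to\infty}\frac{|\langle h,Vf_n\rangle|^2}{\langle f_n,Vf_n\rangle}\leq\sup_{g\in\mathcal{C}:Vg\neq 0}\frac{|\langle h,Vg\rangle|^2}{\langle g,Vg\rangle}\:.
\]
Taking the supremum over all admissible $f\in\mathcal{D}(V)$ on the left gives the reverse inequality, and combining the two yields the claimed equality.

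The one genuinely delicate point to get right in the write-up is the strict positivity $\langle f,Vf\rangle>0$ whenever $Vf\neq 0$: this follows because $V$ non-negative means $V=(V^{1/2})^*V^{1/2}$ on $\mathcal{D}(V)$ (or more elementarily, $\langle f,Vf\rangle=0$ together with non-negativity and Cauchy--Schwarz for the non-negative form forces $Vf=0$). I expect this to be the step most worth stating carefully, since without it both the well-definedness of the quotients and the continuity argument above would be in jeopardy; everything else is a routine limiting argument using the definition of a core.
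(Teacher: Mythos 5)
Your proof is correct and follows essentially the same route as the paper: approximate each $f\in\mathcal{D}(V)$ with $Vf\neq 0$ by a core sequence $\{f_n\}\subset\mathcal{C}$, note that $Vf_n\neq 0$ eventually and that $\langle f,Vf\rangle>0$ (the paper asserts this implicitly; your Cauchy--Schwarz justification for the non-negative form is the right one), and pass to the limit in the quotient. The only cosmetic difference is that you prove the nontrivial inequality pointwise for each $f$ and then take suprema, whereas the paper first picks a maximizing sequence in $\mathcal{D}(V)$ and closes with a diagonal-sequence argument; your arrangement avoids the diagonalization but is identical in substance.
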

\begin{proof} Let $s\in \R^+\cup\{\infty\}$ be defined as
\begin{equation*}
s:= \sup_{f\in\mathcal{D}(V):Vf\neq 0}\frac{|\langle h,Vf\rangle|^2}{\langle f,Vf\rangle}\:.
\end{equation*}
This means that there exists a sequence $\{f_n\}_n\subset\mathcal{D}(V)$ with $Vf_n\neq 0$ such that
\begin{equation*}
\lim_{n\rightarrow\infty}\frac{|\langle h,Vf_n\rangle|^2}{\langle f_n,Vf_n\rangle}=s\:.
\end{equation*}
On the other hand, since $\mathcal{C}$ is a core for $V$, for any $f_n\in\mathcal{D}(V)$, there exists a sequence $\{f_{n,m}\}_m\subset\mathcal{C}$ such that 
\begin{equation*}
\lim_{m\rightarrow\infty} f_{n,m}=f_n\quad\text{and}\quad \lim_{m\rightarrow\infty} Vf_{n,m}=Vf_n\:.
\end{equation*}
Thus, for any fixed $h\in\mathcal{H}$ and $f_n\in\mathcal{D}(V)$ such that $Vf_n\neq0$, we have also $\langle f_n,Vf_n\rangle\neq 0$ and therefore
\begin{equation*}
\lim_{m\rightarrow\infty}\frac{|\langle h, Vf_{n,m}\rangle|}{\langle f_{n,m},Vf_{n,m}\rangle}=\frac{|\langle h, Vf_{n}\rangle|}{\langle f_{n},Vf_{n}\rangle}\:.
\end{equation*}
 Hence, a diagonal sequence argument yields the lemma. 
\end{proof}
\begin{definition} Let $\mathcal{V}\subset\mathcal{D}(\widetilde{A}^*)//\mathcal{D}(A)$ be a subspace. Then, the operator $A_\mathcal{V}$ is defined as
\begin{align*}
A_\mathcal{V}:\qquad\mathcal{D}(A_\mathcal{V})=\mathcal{D}(A)\dot{+}\mathcal{V}, \quad
A_\mathcal{V}&=\widetilde{A}^*\upharpoonright_{\mathcal{D}(A_\mathcal{V})}\:.
\end{align*} \label{def:subspaceextension}
\end{definition}
\begin{theorem} Let $(A,\widetilde{A})$ be a dual pair of operators having the common core property, where $A$ is dissipative and let $\mathcal{D}\subset(\mathcal{D}(A)\cap\mathcal{D}(\widetilde{A}))$ be a common core for $A$ and for $\widetilde{A}$. Then, the operator $V:=\frac{A-\widetilde{A}}{2i}$ defined on $\mathcal{D}$ is a non-negative symmetric operator. Moreover, let $\mathcal{V}\subset \mathcal{D}(\widetilde{A}^*)//\mathcal{D}(A)$ be a linear space. Then, the operator $A_\mathcal{V}$ is dissipative if and only if $\mathcal{V}\subset\mathcal{D}(V_K^{1/2})$ and
\begin{equation*}
\Imag\langle v,\widetilde{A}^*v\rangle\geq \|V_K^{1/2}v\|^2\quad\text{for all}\quad v\in\mathcal{V}\:.
\end{equation*}
The operator $V_K$ does not depend on the specific choice of $\mathcal{D}$ as long as $\mathcal{D}\subset(\mathcal{D}(A)\cap\mathcal{D}(\widetilde{A}))$ is a common core for $A$ and $\widetilde{A}$.
\label{prop:telemann}
\end{theorem}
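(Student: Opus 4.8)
Here is how I would approach Theorem \ref{prop:telemann}. I would organise the argument into three parts: (i) $V$ is symmetric and non-negative on $\mathcal{D}$; (ii) the dissipativity criterion for $A_\mathcal{V}$; (iii) independence of $V_K$ of the choice of common core.

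For (i) the plan is a direct computation. For $f,g\in\mathcal{D}$ the dual pair relations give $Af=\widetilde{A}^*f$ and $\widetilde{A}f=A^*f$, hence $\langle Af,g\rangle=\langle f,\widetilde{A}g\rangle$ and $\langle \widetilde{A}f,g\rangle=\langle f,Ag\rangle$, so that
\[
\langle Vf,g\rangle=\tfrac{1}{2i}\big(\langle f,Ag\rangle-\langle f,\widetilde{A}g\rangle\big)=\langle f,Vg\rangle ,
\]
which is symmetry; and $\langle f,\widetilde{A}f\rangle=\langle f,A^*f\rangle=\overline{\langle f,Af\rangle}$ gives $\langle f,Vf\rangle=\Imag\langle f,Af\rangle\geq 0$ since $A$ is dissipative. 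Thus $V$ is a densely defined non-negative symmetric operator on $\mathcal{D}$, so $V_K$, $V_K^{1/2}$ and the Ando--Nishio description of Proposition \ref{thm:nett} are available, with the supremum there taken only over $\mathcal{D}$ by Lemma \ref{lemma:coresup} (applied to the closure of $V$, for which $\mathcal{D}$ is a core).

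For (ii) the main point is that, by Lemma \ref{lemma:stollen}, $\mathcal{D}\dot{+}\mathcal{V}$ is a core for $A_\mathcal{V}$, so $A_\mathcal{V}$ is dissipative iff $\Imag\langle f+v,\widetilde{A}^*(f+v)\rangle\geq 0$ for all $f\in\mathcal{D}$ and $v\in\mathcal{V}$. Expanding this using $\widetilde{A}^*f=Af$, the adjoint identity $\langle f,\widetilde{A}^*v\rangle=\langle\widetilde{A}f,v\rangle$, and $Af-\widetilde{A}f=2iVf$ on $\mathcal{D}$, I expect
\[
\Imag\langle f+v,\widetilde{A}^*(f+v)\rangle=\langle f,Vf\rangle+2\Real\langle v,Vf\rangle+\Imag\langle v,\widetilde{A}^*v\rangle .
\]
For fixed $v$, replacing $f$ by $\alpha f$ with $\alpha\in\C$ and minimising gives
\[
\inf_{f\in\mathcal{D}}\big(\langle f,Vf\rangle+2\Real\langle v,Vf\rangle\big)=-\sup_{f\in\mathcal{D},\,Vf\neq 0}\frac{|\langle v,Vf\rangle|^2}{\langle f,Vf\rangle},
\]
and by Lemma \ref{lemma:coresup} and Proposition \ref{thm:nett} the right-hand supremum equals $\|V_K^{1/2}v\|^2$ if $v\in\mathcal{D}(V_K^{1/2})$ and $+\infty$ otherwise. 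Hence $A_\mathcal{V}$ is dissipative iff every $v\in\mathcal{V}$ lies in $\mathcal{D}(V_K^{1/2})$ and $\Imag\langle v,\widetilde{A}^*v\rangle\geq\|V_K^{1/2}v\|^2$, as claimed.

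For (iii) I would use Ando--Nishio again: by Lemma \ref{lemma:coresup}, $\|V_K^{1/2}h\|^2=\sup_{f\in\mathcal{D},\,Vf\neq 0}|\langle h,Vf\rangle|^2/\langle f,Vf\rangle$, so it suffices to show this supremum does not depend on the common core $\mathcal{D}$; since every common core is contained in the maximal one $\mathcal{D}(A)\cap\mathcal{D}(\widetilde{A})$ (which is itself a common core by the common core property), one inequality is trivial and only "$\geq$" is at issue. Given $f\in\mathcal{D}(A)\cap\mathcal{D}(\widetilde{A})$ with $Vf\neq 0$, pick $g_n\in\mathcal{D}$ with $g_n\to f$ and $Ag_n\to Af$; then $\langle g_n,Vg_n\rangle=\Imag\langle g_n,Ag_n\rangle\to\langle f,Vf\rangle$, and for $h\in\mathcal{D}(\widetilde{A}^*)$ one has $\langle h,\widetilde{A}g_n\rangle=\langle\widetilde{A}^*h,g_n\rangle\to\langle\widetilde{A}^*h,f\rangle=\langle h,\widetilde{A}f\rangle$, whence $\langle h,Vg_n\rangle\to\langle h,Vf\rangle$ and the quotients converge; this already gives core-independence for test vectors $h\in\mathcal{D}(\widetilde{A}^*)$. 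To extend to all $h\in\mathcal{H}$ I would show that in fact $\mathcal{D}$ is a core for the closure of $V$: once the approximating sequence is arranged to satisfy $\sup_n\|\widetilde{A}g_n\|<\infty$ (which one can hope to secure by a rescaling/Baire-category argument exploiting that $\{g\in\mathcal{D}:\|\widetilde{A}g\|\leq R\}$ is convex and symmetric), the identity above upgrades to $\widetilde{A}g_n\rightharpoonup\widetilde{A}f$, hence $Vg_n\rightharpoonup Vf$, and a Mazur convex-combination argument (convex combinations stay in $\mathcal{D}$, while the $A$- and $\widetilde{A}$-images then converge strongly) produces elements of $\mathcal{D}$ converging to $f$ in the graph norm of $V$; thus the closure of $V$, and therefore $V_K$ and $V_K^{1/2}$ (e.g.\ via \eqref{eq:kreinformdomain} together with the fact that the form norm $(\|\cdot\|^2+\langle\cdot,V\cdot\rangle)^{1/2}$ is dominated by both graph norms and so has a common closure), is independent of $\mathcal{D}$.

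I expect part (iii) — specifically the step that a common core for $(A,\widetilde{A})$ is automatically a core for $V$ — to be the main obstacle: convergence of $g_n$ in the graph norm of $A$ does not by itself control $\|\widetilde{A}g_n\|$, and making the weak-limit/Mazur mechanism run requires first producing a uniform bound on $\|\widetilde{A}g_n\|$, which is the delicate point. By contrast, parts (i) and (ii) should be routine once the antilinear-in-the-first-slot convention and the dual pair identities are handled carefully.
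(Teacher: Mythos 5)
Parts (i) and (ii) of your plan are correct and land essentially where the paper does. For (ii), the paper first rotates the phase of $v$ to reduce dissipativity on $\mathcal{D}\dot{+}\mathcal{V}$ to the condition $\langle f,Vf\rangle+\Imag\langle v,\widetilde{A}^*v\rangle-2|\langle Vf,v\rangle|\geq 0$, and then proves sufficiency by Cauchy--Schwarz through the factorization $Vf=V_K^{1/2}(V_K^{1/2}f)$ and necessity by constructing two explicit sequences ($h_n=f_n/\sqrt{\langle f_n,Vf_n\rangle}$ and $g_n=\mu_n f_n$) realizing the Ando--Nishio supremum. Your variational packaging --- minimizing $\langle f,Vf\rangle+2\Real\langle v,Vf\rangle$ over $\alpha f$ and identifying the infimum with $-\sup_{Vf\neq 0}|\langle v,Vf\rangle|^2/\langle f,Vf\rangle$ --- yields both directions in one stroke and is, if anything, cleaner; the content (Proposition \ref{thm:nett} plus Lemma \ref{lemma:coresup}) is the same.

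Part (iii) is where the genuine gap sits, exactly where you flag it. Your weak-convergence device ($\langle h,\widetilde{A}g_n\rangle=\langle\widetilde{A}^*h,g_n\rangle\to\langle h,\widetilde{A}f\rangle$) correctly establishes core-independence of the Ando--Nishio supremum for test vectors $h\in\mathcal{D}(\widetilde{A}^*)$ --- which, it is worth noting, already makes the dissipativity \emph{criterion} core-independent, since it only evaluates $V_K^{1/2}$ on $\mathcal{V}\subset\mathcal{D}(\widetilde{A}^*)$. But the theorem asserts that the \emph{operator} $V_K$ is core-independent, and your route to that requires showing $\overline{V\upharpoonright_{\mathcal{D}'}}=\overline{V\upharpoonright_{\mathcal{D}}}$, which you leave open. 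The proposed repair does not work as stated: there is no complete metric space on which to run Baire category ($\mathcal{D}'$ is only dense in the completion under the $A$-graph norm, and the sublevel sets $\{g\in\mathcal{D}':\|\widetilde{A}g\|\leq R\}$ need not be closed there), and if one of them were non-meager, convexity and symmetry would force $\widetilde{A}$ to be relatively bounded with respect to $A$ on $\mathcal{D}'$ --- a hypothesis the theorem deliberately avoids and which fails in the singular examples it is built for. So a uniform bound $\sup_n\|\widetilde{A}g_n\|<\infty$ cannot be extracted from an arbitrary $A$-graph-convergent sequence, and the Mazur step never gets off the ground. For comparison, the paper disposes of this point in three lines: it takes $f_n\in\mathcal{D}'$ with $f_n\to f$ and $Af_n\to Af$ and asserts that, $\mathcal{D}'$ being a core for $\widetilde{A}$ as well, the same sequence satisfies $\widetilde{A}f_n\to\widetilde{A}f$, whence $Vf_n\to Vf$. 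You have in effect put your finger on the step the paper treats as immediate; your instinct that it is the delicate point is sound, but your proposal does not close it either.
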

\begin{proof}
Since $\Imag\langle f,Af\rangle\geq 0$ for all $f\in\mathcal{D}(A)$, by Lemma \ref{coro:keks},  this implies that $\Imag\langle f,\widetilde{A}f\rangle\leq 0$ for all $f\in\mathcal{D}(\widetilde{A})$ and hence, $\widetilde{A}$ is anti-dissipative.
Next, let us show that $V$ is symmetric and non-negative. For any $f\in\mathcal{D}$ we get
\begin{equation} \label{eq:imaginary}
\langle f,V f\rangle=\frac{1}{2i}\left(\langle f,Af\rangle-\langle f,\widetilde{A} f\rangle\right)=\frac{1}{2i}\left(\langle f,Af\rangle-\langle Af,f\rangle\right)=\Imag\langle f,Af\rangle\geq 0
\end{equation}
by assumption. Let us now prove the criterion for dissipativity. By Lemma \ref{lemma:stollen}, it is sufficient to check dissipativity for all elements of $\mathcal{D}(A_\mathcal{V})$, which are of the form $f+ v$, where $f\in\mathcal{D}$ and $v\in\mathcal{V}$. Thus, it suffices to show that
\begin{equation*}
\Imag\langle f+v, \widetilde{A}^*(f+ v)\rangle\geq 0\quad\text{for all}\quad f\in\mathcal{D},\:v\in\mathcal{V}\:
\end{equation*}
if $\mathcal{V}\subset\mathcal{D}(V_K^{1/2})$ and $\Imag\langle v,\widetilde{A}^*v\rangle\geq\|V_K^{1/2}v\|^2$ for all $v\in\mathcal{V}$. Then by \eqref{eq:imaginary}:
\begin{align*} 
\Imag\langle f&+v, \widetilde{A}^*(f+ v)\rangle=\Imag\langle f,Af\rangle+\Imag\langle v,\widetilde{A}^* v\rangle+\Imag\left(\langle f,\widetilde{A}^*v\rangle+\langle  v, Af\rangle \right)\\
&{=}\langle f,V f\rangle + \Imag\langle v,\widetilde{A}^*v\rangle-\Imag\langle(A-\widetilde{A})f, v\rangle=\langle f,V f\rangle + \Imag\langle v,\widetilde{A}^*v\rangle-\Imag \langle 2iVf, v\rangle\:.
\end{align*}
Observe that for any given $v$, one can always consider $e^{i\vartheta}v$ instead of $v$, where $\vartheta\in [0,2\pi)$ is chosen such that $\Imag \langle 2iVf, e^{i\vartheta} v\rangle=-2\left|\langle Vf, v\rangle\right|$ without changing the other two terms, which means that showing 
\begin{equation} \langle f,Vf\rangle+\Imag\langle v,\widetilde{A}^*v\rangle-2|\langle Vf,v\rangle|\geq 0\quad \text{for all}\: f\in\mathcal{D},\: v\in\mathcal{V} \label{eq:positive}
\end{equation}
is necessary and sufficient for $A_\mathcal{V}$ being dissipative.\\
Let us begin by showing that $\mathcal{V}\subset\mathcal{D}(V_K^{1/2})$ and $\Imag\langle v,\widetilde{A}^*v\rangle\geq\|V_K^{1/2}v\|^2$ is sufficient for $A_\mathcal{V}$ to be dissipative. Thus, let us now assume that these two assumptions are satisfied. Since $V\subset\overline{V}\subset V_K$ and $\mathcal{D}(V)\subset\mathcal{D}(V_K)\subset\mathcal{D}(V_K^{1/2})$, this means that we can write $Vf=V_Kf=\left(V_K^{1/2}\right)\left(V_K^{1/2}f\right)$. We therefore get that 
\begin{align*}
\langle f,&Vf\rangle+\Imag\langle v,\widetilde{A}^*v\rangle-2|\langle Vf,v\rangle|=\|V_K^{1/2}f\|^2+\Imag\langle v,\widetilde{A}^*v\rangle-2|\langle V_K^{1/2}f,V_K^{1/2}v\rangle|\\&\geq \|V_K^{1/2}f\|^2+\Imag\langle v,\widetilde{A}^*v\rangle-2\|V_K^{1/2}f\|\|V_K^{1/2}v\|\geq \|V_K^{1/2}f\|^2+\|V_K^{1/2}v\|^2-2\|V_K^{1/2}f\|\|V_K^{1/2}v\|\\&=\left(\|V_K^{1/2}f\|-\|V_K^{1/2}v\|\right)^2\geq 0\:,
\end{align*}
Next, let us show that the condition $\mathcal{V}\subset\mathcal{D}(V_K^{1/2})$ is necessary for $A_\mathcal{V}$ to be dissipative. Thus, let us assume that $\mathcal{V}\not\subset\mathcal{D}(V_K^{1/2})$, i.e. that there exists a $v\in\mathcal{V}$ such that $v\notin\mathcal{D}(V_K^{1/2})$. Using that $\mathcal{D}(V)=\mathcal{D}$ is a core for $\overline{V}$, we have by Proposition \ref{thm:nett} and by Lemma \ref{lemma:coresup} that there exists a sequence $\{f_n\}_n\subset\mathcal{D}(V)$ with $Vf_n\neq 0$ and therefore $\langle f_n,Vf_n\rangle\neq 0$, such that
\begin{equation*}
\lim_{n\rightarrow\infty}\frac{|\langle v,Vf_n\rangle|}{\sqrt{\langle f_n,V f_n\rangle}}=+\infty\:.
\end{equation*}
Define the sequence $\{h_n\}_n\subset\mathcal{D}(V)$ by $h_n:={f_n}/{\sqrt{\langle f_n,V f_n\rangle}}$
and observe that 
\begin{equation*}
\frac{|\langle v,Vf_n\rangle|}{\sqrt{\langle f_n,V f_n\rangle}}=\frac{|\langle v,Vh_n\rangle|}{\sqrt{\langle h_n,V h_n\rangle}}\quad\text{and}\quad\sqrt{\langle h_n,Vh_n\rangle}=1 \quad\text{for all}\quad n\in\N\:.
\end{equation*}
From this we get that 
\begin{align*}
\lim_{n\rightarrow\infty}\left(\Imag \langle v,\widetilde{A}^*v\rangle+\langle h_n, V h_n\rangle-2|\langle Vh_n,v\rangle|\right)=\Imag\langle v,\widetilde{A}^* v\rangle+1-2\lim_{n\rightarrow\infty}\frac{|\langle v,V h_n\rangle|}{\sqrt{\langle h_n,V h_n\rangle}}=-\infty\:,
\end{align*}
which shows that Condition \eqref{eq:positive} can never be satisfied in this case.\\ Let us finish the proof by showing that $\Imag\langle v,\widetilde{A}^*v\rangle\geq \|V_K^{1/2}v\|^2$ for all $v\in\mathcal{V}$ is necessary for $A_\mathcal{V}$. By \eqref{eq:positive}, it suffices to show that for any $v\in\mathcal{D}(V_K^{1/2})$, there exists a sequence $\{g_n\}_n\subset\mathcal{D}(V)$ such that
\begin{equation} \label{eq:doesthejob}
2|\langle Vg_n,v\rangle|-\langle g_n,Vg_n\rangle\overset{n\rightarrow\infty}{\longrightarrow}\|V_K^{1/2}v\|^2\:.
\end{equation}
For the case $V_K^{1/2}v=0$, this sequence would just be given by $f_n=0$ for all $n$, therefore let us assume $V_K^{1/2}v\neq 0$ from now on. By Proposition \ref{thm:nett}, we know that there exists a sequence $\{f_n\}_n\subset\mathcal{D}(V)$ with $Vf_n\neq 0$ such that 
\begin{equation*}
\frac{|\langle v,Vf_n\rangle|^2}{\langle f_n,Vf_n\rangle}\overset{n\rightarrow\infty}{\longrightarrow}\|V_K^{1/2}v\|^2\:.
\end{equation*}
Define the positive numbers $\mu_n$ by $\mu_n:={|\langle v,Vf_n\rangle|}/{\langle f_n,Vf_n\rangle}$
and observe that the sequence $\{g_n\}_n$, where $g_n:=\mu_nf_n$, is exactly as required for \eqref{eq:doesthejob}:
\begin{align*}
2|\langle \mu_n Vf_n,v\rangle|&-\langle \mu_n f_n, \mu_n Vf_n\rangle=2|\langle Vf_n,v\rangle|\frac{|\langle Vf_n,v\rangle|}{\langle f_n,Vf_n\rangle}-\frac{|\langle Vf_n,v\rangle|^2}{\langle f_n,Vf_n\rangle^2}\langle f_n,Vf_n\rangle\\&=\frac{|\langle Vf_n,v\rangle|^2}{\langle f_n,Vf_n\rangle}\overset{n\rightarrow\infty}{\longrightarrow}\|V_K^{1/2}v\|^2\:.
\end{align*}
Finally, let us show that for $\mathcal{D}'\subset\mathcal{D}:= (\mathcal{D}(A)\cap\mathcal{D}(\widetilde{A}))$ both being common cores for $A$ and $\widetilde{A}$, we have that the Kre\u\i n-von Neumann extensions of 
$V_\mathcal{D'}=(A-\widetilde{A})/(2i)\upharpoonright_{\mathcal{D}'}$ and $V_\mathcal{D}=(A-\widetilde{A})/(2i)\upharpoonright_{\mathcal{D}}$ coincide. As we have already shown that $V_{\mathcal{D}'}$ and $V_\mathcal{D}$ are symmetric, it suffices to show that $\overline{V_{\mathcal{D}'}}=\overline{V_\mathcal{D}}$. Since $V_{\mathcal{D}'}\subset V_\mathcal{D}$, this will follow from $V_\mathcal{D}\subset\overline{V_{\mathcal{D}'}}$. Using that $\mathcal{D}'$ and $\mathcal{D}$ are cores for both $A$ and $\widetilde{A}$, we know that for any $f\in\mathcal{D}$, there exists a sequence $\{f_n\}_n\subset\mathcal{D}'$ such that $f_n\rightarrow f$ and $Af_n\rightarrow Af$. Moreover, since $f\in\mathcal{D}(\widetilde{A})$ and $\mathcal{D}'$ is a core for $\widetilde{A}$, this means that the sequence $\widetilde{A}f_n$ converges to $\widetilde{A}f$. Thus, for any $f\in\mathcal{D}$, there exists a sequence $\{f_n\}_n\subset\mathcal{D}'$ such that $V_{\mathcal{D}'}f_n=(2i)^{-1}(A-\widetilde{A})f_n\rightarrow (2i)^{-1}(A-\widetilde{A})f=V_\mathcal{D}f$, which implies that $V_\mathcal{D}\subset\overline{V_{\mathcal{D}'}}$.
\end{proof}
\begin{corollary} Let $(A,\widetilde{A})$ be a dual pair satisfying the assumptions of Theorem \ref{prop:telemann}. If for some $\lambda\in\C^-$ we have that 
\begin{equation} \label{eq:nodomain}
\ker(\widetilde{A}^*-\lambda)\cap\mathcal{D}(V_K^{1/2})=\{0\}\:,
\end{equation}
then there exists exactly one proper maximally dissipative extension of the dual pair $(A,\widetilde{A})$. \label{coro:uniquediss}
\end{corollary}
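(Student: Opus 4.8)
The plan is to prove existence and uniqueness separately, existence being essentially immediate. Under the hypotheses of Theorem~\ref{prop:telemann} the operator $A$ is dissipative and, as established inside the proof of that theorem via Lemma~\ref{coro:keks}, the operator $\widetilde A$ is antidissipative; hence Proposition~\ref{thm:friendly} already furnishes a proper maximally dissipative extension of $(A,\widetilde A)$. All the real work therefore lies in showing that there is only one.

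For uniqueness, let $\widehat A_1$ and $\widehat A_2$ both be proper maximally dissipative extensions. By Proposition~\ref{thm:bally} each satisfies $\C^-\subset\rho(\widehat A_j)$, so in particular $\lambda\in\rho(\widehat A_j)$ for the $\lambda\in\C^-$ in the hypothesis. The first step is to pin down $\mathcal{D}(\widehat A_j)$. Since $A$ is dissipative, $\lambda\in\C^-\subset\widehat{\rho}(A)$; since $\widetilde A$ is antidissipative, $\overline{\lambda}\in\C^+\subset\widehat{\rho}(\widetilde A)$; and $\lambda\in\rho(\widehat A_j)$. Hence Proposition~\ref{thm:EEvM} applies and, using that $A$ is closed (so $\mathcal{D}(\overline A)=\mathcal{D}(A)$), gives
\[
\mathcal{D}(\widetilde A^*)=\mathcal{D}(A)\,\dot{+}\,(\widehat A_j-\lambda)^{-1}\ker(A^*-\overline{\lambda})\,\dot{+}\,\ker(\widetilde A^*-\lambda).
\]
Next I would observe that $\ker(\widetilde A^*-\lambda)\cap\mathcal{D}(\widehat A_j)=\{0\}$: if $u$ lies in both, then $(\widehat A_j-\lambda)u=(\widetilde A^*-\lambda)u=0$, so $u=0$ because $\lambda\in\rho(\widehat A_j)$. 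Since the first two summands above are contained in $\mathcal{D}(\widehat A_j)$, a one-line complementarity argument with the displayed decomposition then yields $\mathcal{D}(\widehat A_j)=\mathcal{D}(A)\,\dot{+}\,\mathcal{V}_j$ with $\mathcal{V}_j:=(\widehat A_j-\lambda)^{-1}\ker(A^*-\overline{\lambda})$, and $\mathcal{V}_j\cap\mathcal{D}(A)=\{0\}$ is part of the $\dot{+}$ in Proposition~\ref{thm:EEvM}. Thus $\widehat A_j=A_{\mathcal{V}_j}$ in the notation of Definition~\ref{def:subspaceextension}, and since $A_{\mathcal{V}_j}$ is dissipative, Theorem~\ref{prop:telemann} forces $\mathcal{V}_j\subset\mathcal{D}(V_K^{1/2})$.

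The crux is then a short difference argument. Fix any $\phi\in\ker(A^*-\overline{\lambda})$ and set $v_j:=(\widehat A_j-\lambda)^{-1}\phi\in\mathcal{V}_j\subset\mathcal{D}(V_K^{1/2})$. Then $v_1-v_2\in\mathcal{D}(V_K^{1/2})$, while $(\widetilde A^*-\lambda)(v_1-v_2)=\phi-\phi=0$ shows $v_1-v_2\in\ker(\widetilde A^*-\lambda)$. Hypothesis~\eqref{eq:nodomain} now forces $v_1-v_2=0$. As $\phi$ was arbitrary, $\mathcal{V}_1=(\widehat A_1-\lambda)^{-1}\ker(A^*-\overline{\lambda})=(\widehat A_2-\lambda)^{-1}\ker(A^*-\overline{\lambda})=\mathcal{V}_2$, hence $\mathcal{D}(\widehat A_1)=\mathcal{D}(\widehat A_2)$; since both operators act as $\widetilde A^*$ on this common domain, $\widehat A_1=\widehat A_2$.

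The argument is short and not really obstructed at any single point; its content is the choice of objects. The step demanding a little care is extracting $\mathcal{D}(\widehat A_j)=\mathcal{D}(A)\,\dot{+}\,(\widehat A_j-\lambda)^{-1}\ker(A^*-\overline{\lambda})$ from the three-term decomposition of Proposition~\ref{thm:EEvM}, because it is precisely this that makes the extending space $\mathcal{V}_j$ accessible to Theorem~\ref{prop:telemann}; once that identification is in place, combining Theorem~\ref{prop:telemann} with \eqref{eq:nodomain} in the difference step finishes the proof.
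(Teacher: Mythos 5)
Your proof is correct, and it reaches the conclusion by a slightly different route than the paper. The paper fixes a single maximally dissipative extension $\widehat{A}$ (from Proposition \ref{thm:friendly}), invokes the Grubb decomposition $\mathcal{D}(\widetilde{A}^*)=\mathcal{D}(A)\dot{+}(\widehat{A}-\lambda)^{-1}\ker(A^*-\overline{\lambda})\dot{+}\ker(\widetilde{A}^*-\lambda)$ together with $\mathcal{D}(\widehat A)=\mathcal{D}(A)\dot{+}(\widehat{A}-\lambda)^{-1}\ker(A^*-\overline{\lambda})$, and then shows the stronger statement that \emph{every} proper dissipative extension is a restriction of $\widehat{A}$: any competitor's extending subspace $\mathcal{V}$ must contain a vector $v=(\widehat{A}-\lambda)^{-1}k_{\overline{\lambda}}+\widetilde{k}_\lambda$ with $\widetilde{k}_\lambda\neq 0$, and since $(\widehat{A}-\lambda)^{-1}k_{\overline{\lambda}}\in\mathcal{D}(V_K^{1/2})$ by Theorem \ref{prop:telemann}, hypothesis \eqref{eq:nodomain} forces $v\notin\mathcal{D}(V_K^{1/2})$, so the competitor is not dissipative. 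You instead compare two maximally dissipative extensions pairwise and show their resolvents agree on $\ker(A^*-\overline{\lambda})$, since the difference $v_1-v_2$ lands in $\ker(\widetilde{A}^*-\lambda)\cap\mathcal{D}(V_K^{1/2})=\{0\}$. The underlying mechanism is identical --- an element of $\mathcal{D}(V_K^{1/2})$ cannot have a nonzero $\ker(\widetilde{A}^*-\lambda)$-component relative to the Grubb decomposition --- but your version yields only the uniqueness statement as asserted, whereas the paper's version additionally identifies $\widehat{A}$ as the largest proper dissipative extension; on the other hand your derivation of $\mathcal{D}(\widehat{A}_j)=\mathcal{D}(A)\dot{+}(\widehat{A}_j-\lambda)^{-1}\ker(A^*-\overline{\lambda})$ from the three-term decomposition (via $\ker(\widetilde{A}^*-\lambda)\cap\mathcal{D}(\widehat{A}_j)=\{0\}$) is self-contained, where the paper simply cites \cite{Grubb68}. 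All the auxiliary steps you use (closedness of $A$, injectivity of $\widehat{A}_j-\lambda$, applicability of Theorem \ref{prop:telemann} to $\mathcal{V}_j$) check out.
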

\begin{proof}
By Proposition \ref{thm:friendly}, we know that there exists a maximally dissipative extension $\widehat{A}$ and by Proposition \ref{thm:bally}, we know that $\C^-\in\rho(\widehat{A})$. Moreover, by \cite{Grubb68} we have that 
\begin{equation*}
\mathcal{D}(\widehat{A})=\mathcal{D}(A)\dot{+}(\widehat{A}-\lambda)^{-1}\ker(A^*-\overline{\lambda})
\end{equation*}
as well as
\begin{equation*}
\mathcal{D}(\widetilde{A}^*)=\mathcal{D}(A)\dot{+}(\widehat{A}-\lambda)^{-1}\ker(A^*-\overline{\lambda})\dot{+}\ker(\widetilde{A}^*-\lambda)\:.
\end{equation*}
By Theorem \ref{prop:telemann}, we know that $(\widehat{A}-\lambda)^{-1}\ker(A^*-\overline{\lambda})\subset\mathcal{D}(V_K^{1/2})$. As any other proper extension $A_\mathcal{V}$ of $(A,\widetilde{A})$ that is not a restriction of $\widehat{A}$ can be characterized by a subspace $\mathcal{V}$ that  without loss of generality we can assume to be contained in $(\widehat{A}-\lambda)^{-1}\ker(A^*-\overline{\lambda})\dot{+}\ker(\widetilde{A}^*-\lambda)$, where $\mathcal{V}\not\subset(\widehat{A}-\lambda)^{-1}\ker(A^*-\overline{\lambda})$, there needs to exist at least one element in $v\in\mathcal{V}$, which is of the form $v=(\widehat{A}-\lambda)^{-1}k_{\overline{\lambda}}+\widetilde{k}_\lambda$, where $k_{\overline{\lambda}}\in\ker(A^*-\overline{\lambda})$ and $\widetilde{k}_\lambda\in\ker(\widetilde{A}^*-\lambda)$ with $\widetilde{k}_\lambda\neq 0$. However, by \eqref{eq:nodomain}, we have that $v\notin\mathcal{D}(V_K^{1/2})$ which implies that $A_\mathcal{V}$ cannot be dissipative.
\end{proof}
\begin{remark} A corresponding result for sectorial operators was shown in \cite[Thm. 3.6.5]{Arli}.
\end{remark}
\begin{remark} In Example \ref{ex:eins} below, we will discuss an operator, for which Corollary \ref{coro:uniquediss} applies.
\end{remark}
\begin{remark} It is not necessary that \eqref{eq:nodomain} holds in order for a dual pair to have only one proper maximally dissipative extension as we will see in Example \ref{ex:zwei} below.
\end{remark}
\begin{theorem} In addition to the assumptions of Theorem \ref{prop:telemann}, assume that 
\begin{equation*}
\dim \mathcal{D}(\widetilde{A}^*)/\mathcal{D}(A)<\infty\:.
\end{equation*}
Moreover, let $\mathcal{W}:= (\mathcal{D}(\widetilde{A}^*)//\mathcal{D}(A))\cap\mathcal{D}(V_K^{1/2})$. Let the quadratic form $q$ be defined as
\begin{equation} \label{eq:form}
q(w):=\Imag\langle w,\widetilde{A}^* w\rangle-\|V_K^{1/2} w\|^2\:,
\end{equation}
which has domain $\mathcal{W}$ and let $M$ be the selfadjoint operator associated to the unique sesquilinear form induced by $q$ by polarization. Let us decompose  $\mathcal{W}=\mathcal{W}_+\oplus\mathcal{W}_0\oplus\mathcal{W}_-$, where $\mathcal{W}_+$ denotes the positive spectral subspace, $\mathcal{W}_0$ denotes $\ker M$ and $\mathcal{W}_-$ denotes the negative spectral subspace of $M$. Furthermore, define 
\begin{equation*}
M_\pm:=\pm MP_{\mathcal{W}_\pm}\:,
\end{equation*}
which allows us to write $M=M_+-M_-$. Note that $M_\pm > 0$ and that $M_+$ and $M_-$ are invertible on $\mathcal{W}_+$, resp.\ on $\mathcal{W}_-$. Let $C$ be a contraction ($\|C\|\leq 1$) from $\mathcal{W}_+\oplus\mathcal{W}_0$ into $\mathcal{W}_-$.
Then, there is a one-one correspondence between all pairs $(\mathfrak{M},C)$ , where $\mathfrak{M}$ is a subspace of $\mathcal{W}_+\oplus\mathcal{W}_0$ and $C$ is a contraction from $\mathcal{W}_+$ into $\mathcal{W}_-$ with $\mathcal{D}(C)=P_{\mathcal{W}_+}\mathfrak{M}$ and all proper dissipative extensions of $A$ via
\begin{align} \label{eq:extensioncontraction}
\mathcal{D}(A_{\mathfrak{M},C})&=\mathcal{D}(A)\dot{+}\{w+\sqrt{M_-}^{-1}C\sqrt{M_+}w, w\in\mathfrak{M}\}\notag\\
A_{\mathfrak{M},C}&=\widetilde{A}^*\upharpoonright_{\mathcal{D}(A_{\mathfrak{M},C})}\:.
\end{align} \label{prop:quadform}
Moreover, for an extension $\mathcal{D}(A_{\mathfrak{M},C})$ to be maximally dissipative, it is necessary that $\mathfrak{M}=\mathcal{W}_+\oplus \mathcal{W}_0$.
\end{theorem}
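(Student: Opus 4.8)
The plan is to derive the statement from Theorem~\ref{prop:telemann} by recasting it as a finite–dimensional problem about non‑negative subspaces of the indefinite inner–product space $(\mathcal W,q)$; note that $\mathcal W$ is finite–dimensional because $\dim\mathcal D(\widetilde A^*)/\mathcal D(A)<\infty$. First I would record that every proper extension $B$ of $A$ (that is, $A\subset B\subset\widetilde A^*$) equals $A_{\mathcal V}$ for a \emph{unique} subspace $\mathcal V\subset\mathcal D(\widetilde A^*)//\mathcal D(A)$, obtained by pulling $\mathcal D(B)/\mathcal D(A)$ back along the natural isomorphism between $\mathcal D(\widetilde A^*)//\mathcal D(A)$ and $\mathcal D(\widetilde A^*)/\mathcal D(A)$. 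By Theorem~\ref{prop:telemann}, such an $A_{\mathcal V}$ is dissipative precisely when $\mathcal V\subset\mathcal W$ and $q(v)\ge 0$ for every $v\in\mathcal V$ (the condition $\mathcal V\subset\mathcal D(V_K^{1/2})$ together with $\mathcal V\subset\mathcal D(\widetilde A^*)//\mathcal D(A)$ being exactly $\mathcal V\subset\mathcal W$). Thus it suffices to classify the $q$‑non‑negative subspaces of $\mathcal W$ and to match the classification with the pairs $(\mathfrak M,C)$.

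For the classification I would run the angle–operator (graph) argument adapted to the splitting $\mathcal W=\mathcal W_+\oplus\mathcal W_0\oplus\mathcal W_-$, on which, writing $w=w_++w_0+w_-$, one has $q(w)=\|\sqrt{M_+}\,w_+\|^2-\|\sqrt{M_-}\,w_-\|^2$. Given a $q$‑non‑negative $\mathcal V$: if $w\in\mathcal V$ has $w_+=w_0=0$, then $0\le q(w)=-\|\sqrt{M_-}w_-\|^2$, and invertibility of $M_-$ on $\mathcal W_-$ forces $w=0$. Hence $(P_{\mathcal W_+}+P_{\mathcal W_0})|_{\mathcal V}$ is injective, so $\mathcal V$ is the graph of a linear map $\Gamma\colon\mathfrak M\to\mathcal W_-$ with $\mathfrak M:=(P_{\mathcal W_+}+P_{\mathcal W_0})\mathcal V\subset\mathcal W_+\oplus\mathcal W_0$, i.e.\ $\mathcal V=\{m+\Gamma m:m\in\mathfrak M\}$. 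Testing $q\ge 0$ on $m+\Gamma m$ gives $\|\sqrt{M_-}\,\Gamma m\|\le\|\sqrt{M_+}\,P_{\mathcal W_+}m\|$ for all $m\in\mathfrak M$; specializing to $m\in\mathfrak M\cap\mathcal W_0$ forces $\Gamma m=0$, so $\Gamma$ factors as $\Gamma=\widehat\Gamma\,P_{\mathcal W_+}|_{\mathfrak M}$ for a well–defined linear $\widehat\Gamma\colon P_{\mathcal W_+}\mathfrak M\to\mathcal W_-$. Conjugating by the (positive, invertible) square roots, $C:=\sqrt{M_-}\,\widehat\Gamma\,\sqrt{M_+}^{-1}$ is then exactly a contraction, and reading off its domain gives precisely~\eqref{eq:extensioncontraction}, $\mathcal V=\{w+\sqrt{M_-}^{-1}C\sqrt{M_+}w:w\in\mathfrak M\}$.

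The converse and the bijectivity are bookkeeping: for any admissible pair $(\mathfrak M,C)$ the right–hand side of~\eqref{eq:extensioncontraction} is a subspace (applying $P_{\mathcal W_+}+P_{\mathcal W_0}$ to $w+\sqrt{M_-}^{-1}C\sqrt{M_+}w$ returns $w$, so the defining map is injective on $\mathfrak M$), reversing the inequality computation shows $q\ge 0$ on it, so $A_{\mathfrak M,C}$ is a proper dissipative extension; and from $\mathcal V$ one recovers $\mathfrak M=(P_{\mathcal W_+}+P_{\mathcal W_0})\mathcal V$ and then $C$ from the $\mathcal W_-$‑components, single–valuedly thanks to the factorization through $P_{\mathcal W_+}$. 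For the final assertion, suppose $A_{\mathfrak M,C}$ is maximally dissipative but $\mathfrak M\subsetneq\mathcal W_+\oplus\mathcal W_0$. Extend the contraction associated to $\widehat\Gamma$ from $\sqrt{M_+}(P_{\mathcal W_+}\mathfrak M)$ to all of $\mathcal W_+$ (say by $0$ on the orthogonal complement), push it back through the square roots to a linear $\Gamma'\colon\mathcal W_+\oplus\mathcal W_0\to\mathcal W_-$ extending $\Gamma$, and observe that its graph is a $q$‑non‑negative subspace strictly containing $\mathcal V$ (strictly, since $(P_{\mathcal W_+}+P_{\mathcal W_0})|_{\mathcal V}$ is injective and $\dim\mathfrak M<\dim(\mathcal W_+\oplus\mathcal W_0)$). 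By Theorem~\ref{prop:telemann} this produces a strictly larger dissipative extension, contradicting maximality; hence $\mathfrak M=\mathcal W_+\oplus\mathcal W_0$ is necessary.

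I expect the one genuinely delicate step to be the graph decomposition in the second paragraph: one must make sure the kernel part $\mathcal W_0$ does not obstruct the graph structure — this is exactly what the factorization $\Gamma=\widehat\Gamma\,P_{\mathcal W_+}$ handles — and one must place the weights $\sqrt{M_+}$ and $\sqrt{M_-}$ correctly so that the inequality $q\ge 0$ becomes exactly the contractivity of $C$ on the stated domain rather than a rescaled version of it. Everything else reduces to elementary linear algebra together with Theorem~\ref{prop:telemann}.
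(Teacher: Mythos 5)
Your proposal is correct and follows essentially the same route as the paper: reduce to the condition $q\geq 0$ on $\mathcal{V}$ via Theorem \ref{prop:telemann}, show a $q$-non-negative subspace is a graph over a subspace of $\mathcal{W}_+\oplus\mathcal{W}_0$ whose angle operator vanishes on the $\mathcal{W}_0$-part, conjugate by $\sqrt{M_\pm}$ to turn $q\geq 0$ into contractivity, and obtain the maximality condition by extending $C$ by zero. If anything, you are slightly more careful than the paper on the bookkeeping point that the contraction naturally lives on $\sqrt{M_+}\,P_{\mathcal{W}_+}\mathfrak{M}$ rather than on $P_{\mathcal{W}_+}\mathfrak{M}$ itself.
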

\begin{proof} By virtue of Theorem \ref{prop:telemann}, we firstly need to show that 
\begin{equation*}
q(v)\geq 0\quad\text{for all}\quad v\in \{w+\sqrt{M_-}^{-1}C\sqrt{M_+}w, w\in\mathfrak{M}\}
\end{equation*}
if $C$ is a contraction.
By definition of $M$ and $M_\pm$, we have that
\begin{align} \label{eq:contractionplusminus}
q(v)&=\langle v,Mv\rangle=\left\langle w+\sqrt{M_-}^{-1}C\sqrt{M_+}w,M\left( w+\sqrt{M_-}^{-1}C\sqrt{M_+}w\right)\right\rangle\notag\\
&=\langle w,M_+ w\rangle-\langle w,\sqrt{M_+}C^*\sqrt{M_-}^{-1}M_-\sqrt{M_-}^{-1}C\sqrt{M_+}w\rangle\notag\\
&=\langle w,\sqrt{M_+}(\idty-C^*C)\sqrt{M_+}w\rangle=\langle\sqrt{M_+}w,(\idty-C^*C)\sqrt{M_+}w\rangle\:,
\end{align}
which is non-negative if $C$ is a contraction on $ \sqrt{M_+}\mathfrak{M}=P_{\mathcal{W}_+}\mathfrak{M}=\mathcal{D}(C)$.\\ Let us now show that any proper dissipative extension has to be of this form. To this end, let $A'$ be a proper dissipative extension of $(A,\widetilde{A})$ and let $\mathfrak{M}'\subset\mathcal{W}$ be such that $\mathcal{D}(A')//\mathcal{D}(A)=\mathfrak{M}'$. Clearly, $\mathcal{W}_-\cap\mathfrak{M}'=\{0\}$, since otherwise we would have that
\begin{equation*}
q(w)=\langle w,Mw\rangle=-\langle w,M_-w\rangle<0
\end{equation*}
for some non-zero $w\in\mathcal{W}_-\cap\mathfrak{M}'$, which would violate the necessary condition as obtained from Theorem \ref{prop:telemann} for $A'$ to be dissipative. This means that any $w\in\mathfrak{M}'$ can be written as $w=w_-^\perp+w_-$ where $w_-^\perp\in\mathcal{W}_+\oplus\mathcal{W}_0$, $w_-^\perp\neq0$ and $w_-\in\mathcal{W}_-$ is possibly zero. Since $\mathcal{W}_-\cap\mathfrak{M}=\{0\}$, it is easy to see that $w_-$ is uniquely determined by $w_-^\perp$. Therefore, there exists a linear operator $B:P_{\mathfrak{M}'}(\mathcal{W}_+\oplus\mathcal{W}_0)\rightarrow\mathcal{W}_-$ such that $w=w_-^\perp+Bw_-^\perp$ for any $w\in\mathfrak{M}'$. Next observe that if for any such $w_-^\perp$ we have that $w_-^\perp\in\mathcal{W}_0$, it follows that $Bw_-^\perp=0$. If this were not true, we would get
\begin{equation*}
q(w_-^\perp+Bw_-^\perp)=\underbrace{\langle w_-^\perp,M_+w_-^\perp\rangle}_{=0}-\langle Bw_-^\perp,M_- Bw_-^\perp\rangle=-\langle Bw_-^\perp,M_- Bw_-^\perp\rangle\:,
\end{equation*}
which again would violate the necessary condition from Theorem \ref{prop:telemann} for $A'$ to be dissipative.
 Plugging this into the quadratic form $q$ yields:
\begin{align*}
q(w_-^\perp+Bw_-^\perp)&=\langle w_-^\perp,M_+w_-^\perp\rangle-\langle Bw_-^\perp,M_- Bw_-^\perp\rangle=\langle w_-^\perp,(M_+-B^*M_-B)w_-^\perp\rangle\\
&=\langle \sqrt{M_+}w_-^\perp,\left(\idty-\sqrt{M_+}^{-1}B^*\sqrt{M_-}\sqrt{M_-}B\sqrt{M_+}^{-1}\right)\sqrt{M_+}w_-^\perp\rangle\:,
\end{align*}
with the understanding that $\sqrt{M_+}^{-1}$ is defined only on $\ran\sqrt{M_+}=\ran M_+$. This is equivalent to saying that the operator $C:=\sqrt{M_-}B\sqrt{M_+}^{-1}$ is a contraction on $\sqrt{M_+}P_{\mathcal{W}_+}\mathfrak{M}'=P_{\mathcal{W}_+}\mathfrak{M}'$, or equivalently, $B=\sqrt{M_-}^{-1}C\sqrt{M_+}$, with $C$ being a contraction from $P_{\mathcal{W}_+}\mathfrak{M}'$ to $\mathcal{W}_-$. The condition that $\mathfrak{M}=\mathcal{W}_+\oplus\mathcal{W}_0$ for $A_{\mathfrak{M},C}$ to be maximally dissipative follows from the fact that one could always extend the operator $A_{\mathfrak{M},C}$ to $A_{\mathcal{W}_+\oplus\mathcal{W}_0, \widehat{C}}$, where $\widehat{C}$ is an extension of $C$ which is just set equal to zero on $(\mathcal{W}_+\oplus\mathcal{W}_0)\ominus\mathfrak{M}$.
\end{proof}
\begin{remark} For the case that the dual pair $(A,\widetilde{A})$ has only one unique maximally dissipative proper extension $\widehat{A}$, this means that $\widehat{A}=A_{\mathcal{W}_+\oplus\mathcal{W}_0, 0}$. In particular, for the case that the assumptions of Corollary \ref{coro:uniquediss} are satisfied, we get that $\mathcal{W}_-=\{0\}$ since $(\mathcal{D}(\widetilde{A}^*)//\mathcal{D}(A))\cap\mathcal{D}(V_K^{1/2})=\mathcal{W}_+\oplus\mathcal{W}_0$.
\end{remark} 
\begin{remark} Let us show that for a very special situation, the spaces $\mathcal{W_\pm}$ coincide with the defect spaces of a symmetric operator $S$. (As an example, take the momentum operator $i\frac{d}{dx}$ with domain $\{f\in H^1(\R), f(0)=0\}$, whose defect spaces are one-dimensional and spanned by exponential functions supported on different half-lines.) Assume that $S$ has finite-dimensional defect spaces $\mathcal{N}_\pm:=\ker(S^*\mp i)$. It is a well-known fact \cite{Weid} that
\begin{equation*}
\mathcal{D}(S^*)=\mathcal{D}(S)\dot{+}\mathcal{N}_+\dot{+}\mathcal{N}_-\:,
\end{equation*}
where $\mathcal{N}_\pm:=\ker(S^*\mp i)$ are the defect spaces. Assume in addition the rather restrictive condition that $\mathcal{N}_+\perp\mathcal{N}_-$ (orthogonal with respect to the Hilbert space inner product).
Choosing the dual pair $(S,S)$, which trivially has the common core property, we find that $V_K=0_\mathcal{H}$, with $V_K$ being defined as in Theorem \ref{prop:telemann}. Define
\begin{equation*}
q(v):=\Imag\langle v, S^*v\rangle\quad\text{with}\quad v\in\mathcal{N}_+\oplus\mathcal{N}_-\:.
\end{equation*}
A calculation shows that the operator $M$ associated to $q$ is given by $M=P_{\mathcal{N}_+}-P_{\mathcal{N}_-}$, i.e.\ $M_\pm=P_{\mathcal{N}_\pm}$, $\mathcal{W}_\pm=\mathcal{N}_\pm$ and $\mathcal{W}_0=\{0\}$. Thus, by Theorem \ref{prop:quadform}, all maximally dissipative extensions of such an operator $S$ are given by
\begin{align*}
\mathcal{D}(S_C)=\mathcal{D}(S)\dot{+}\{n_++Cn_+, n_+\in\mathcal{N}_+\},\quad
S_C=S^*\upharpoonright_{\mathcal{D}(S_C)}\:,
\end{align*}
where $C$ is any contraction into $\mathcal{N}_-$ such that $\mathcal{D}(C)=\mathcal{N}_+$. Thus, for the very special case $\mathcal{N}_+\perp\mathcal{N}_-$, this readily implies the von Neumann theory of selfadjoint/maximally dissipative extensions of symmetric operators. (cf. e.g. \cite[Thm.\ 8.12]{Weid},  for the selfadjoint and \cite[Theorem 2.4]{Kovalev},  for the more general maximally dissipative case)
\end{remark}
\begin{remark} For concrete problems, it seems to be not very practical to construct $\mathcal{W}_+,\mathcal{W}_0$ and $\mathcal{W}_-$ as well as $M_+$ and $M_-$. However, this result allows us to calculate the number of independent complex parameters one can expect to describe all proper maximally dissipative extensions of a dual pair, which is given by the number of parameters that describe all contractions $C$ from $\mathcal{W}_+$ into $\mathcal{W}_-$, which is equal to $\dim\mathcal{W}_+\cdot\dim\mathcal{W}_-\:.$\\
See also the operators considered in Example \ref{ex:firstorder} for a discussion of the spaces $\mathcal{W}_+, \mathcal{W}_-$ and $\mathcal{W}_0$ for a few concrete examples.
\end{remark}
\begin{remark} As a reference to \cite{MalamudOpHoles}, let us point out that this result means that we can characterize all proper dissipative extensions of such a dual pair using the terminology of \emph{operator balls}. For any three operators $Z,R_l,R_r\in\mathcal{B}(\mathcal{E})$, where $\mathcal{E}$ is an arbitrary Hilbert space, recall that the set of all operators $K\in\mathcal{B}(\mathcal{E})$ such that there exists a contraction $C$ from $\ran R_r$ to $\mathcal{D}(R_l)$ such that
\begin{equation*}
K=Z+R_lCR_r
\end{equation*}
is called an operator ball $\mathfrak{B}(Z,R_l,R_r)$ with center point $Z$, left radius $R_l$ and right radius $R_r$. With the identification $\mathcal{E}=\mathcal{W}$, $Z=P_{\mathcal{W}_+}+P_{\mathcal{W}_0}$, $R_l=\sqrt{M_-}^{-1}$ and $R_r=\sqrt{M_+}$ defined on $\mathcal{W}_-$, respectively on $\mathcal{W}_+$ and the result from Theorem \ref{prop:quadform}, we can characterize all proper dissipative extensions of a dual pair $(A,\widetilde{A})$ satisfying the assumptions of Theorem \ref{prop:quadform} via:
\begin{align}
A_K:\quad\mathcal{D}(A_K)=\mathcal{D}(A)\dot{+}\{Kw:w\in\mathcal{W}\},\quad
A_K=\widetilde{A}^*\upharpoonright_{\mathcal{D}(A_K)}\:,
\label{eq:opballs}
\end{align}
where $K\in\mathfrak{B}(P_{\mathcal{W}_+}+P_{\mathcal{W}_0},\sqrt{M_-}^{-1},\sqrt{M_+})$. \label{rem:opballs}
\end{remark}
\subsection{The non-common core case}
Let us now extend this idea to the case where the dual pair $(A,\widetilde{A})$ does not have the common core property. If we assume $\mathcal{D}(A)\cap\mathcal{D}(\widetilde{A})$ to still be dense, we can restrict $A$ and $\widetilde{A}$ to $\mathcal{D}(A)\cap\mathcal{D}(\widetilde{A})$ to obtain a dual pair of operators which has the common core property:
\begin{corollary} Let $A$ and $\widetilde{A}$ be a dual pair of operators, where $A$ is dissipative. Moreover, let $\mathcal{D}(A)\cap\mathcal{D}(\widetilde{A})$ be dense in $\mathcal{H}$. Define the operators $A'$ and $\widetilde{A}'$ as follows:
\begin{align*}
A':=\overline{A\upharpoonright_{\mathcal{D}(A)\cap\mathcal{D}(\widetilde{A})}}\quad\text{and}\quad
\widetilde{A}':=\overline{\widetilde{A}\upharpoonright_{\mathcal{D}(A)\cap\mathcal{D}(\widetilde{A})}}\:.
\end{align*}
Furthermore, let $V'_0$ denote the operator $\frac{1}{2i}(A'-\widetilde{A}')$ on $\mathcal{D}(A)\cap\mathcal{D}(\widetilde{A})$ and $V_K'$ its corresponding Kre\u\i n extension.\\
Now, let $\mathcal{V}\subset \mathcal{D}(\widetilde{A}'^*)//\mathcal{D}(A')$ be a subspace. The operator $A'_\mathcal{V}$ is a proper dissipative extension of the dual pair $A$ and $\widetilde{A}$ if and only if all of the following conditions are satisfied
\begin{itemize}
\item $\mathcal{V}\subset \mathcal{D}({V_K'}^{1/2})$
\item $\Imag\langle v,\widetilde{A}^*v\rangle\geq\|{V_K'}^{1/2}v\|^2$ for all $v\in\mathcal{V}$
\item $\mathcal{D}(A)\subset \mathcal{D}(A'_\mathcal{V})$
\item $\mathcal{V}\subset \mathcal{D}(\widetilde{A}^*)$\:.
\end{itemize}
\end{corollary}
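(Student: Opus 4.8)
The plan is to reduce the statement to Theorem~\ref{prop:telemann} applied to the auxiliary dual pair $(A',\widetilde A')$, the bulk of the work being to verify that this reduction is legitimate and then to match up the ``properness'' requirement $A\subset A'_{\mathcal V}\subset\widetilde A^*$ with the third and fourth conditions in the list. First I would assemble the structural facts. Since $A\subset\widetilde A^*$ and $\widetilde A^*$ is closed, $A'\subset\overline A\subset\widetilde A^*$; in particular $\mathcal D(A')\subset\mathcal D(\widetilde A^*)$ and $\widetilde A^*\upharpoonright_{\mathcal D(A')}=A'$. Dually, $\widetilde A'\subset\overline{\widetilde A}$ gives $\widetilde A^*=(\overline{\widetilde A})^*\subset\widetilde A'^*$, so that $\widetilde A'^*v=\widetilde A^*v$ for every $v\in\mathcal D(\widetilde A^*)$. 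A limiting argument — push the identity $\langle\widetilde A g,f\rangle=\langle g,Af\rangle$, valid for $f\in\mathcal D(A)\cap\mathcal D(\widetilde A)$ and $g\in\mathcal D(\widetilde A)$, to the graph-norm closures in both variables, much as in the proof of Lemma~\ref{coro:keks} — shows $A'\subset\widetilde A'^*$, so $(A',\widetilde A')$ is a dual pair; since $\mathcal D(A)\cap\mathcal D(\widetilde A)$ is a core for both $A'$ and $\widetilde A'$ by construction, this dual pair has the common core property, and restricting the dissipative $A$ and then closing shows $A'$ is dissipative. Finally, $A'$ and $\widetilde A'$ agree with $A$ and $\widetilde A$ on $\mathcal D(A)\cap\mathcal D(\widetilde A)$, so the operator $\frac{1}{2i}(A'-\widetilde A')=V_0'$, and hence $V_K'$, are precisely the objects Theorem~\ref{prop:telemann} attaches to $(A',\widetilde A')$.

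With this in place, Theorem~\ref{prop:telemann} applied to $(A',\widetilde A')$ says that $A'_{\mathcal V}$ is dissipative if and only if $\mathcal V\subset\mathcal D({V_K'}^{1/2})$ and $\Imag\langle v,\widetilde A'^*v\rangle\ge\|{V_K'}^{1/2}v\|^2$ for all $v\in\mathcal V$. Assuming the fourth condition $\mathcal V\subset\mathcal D(\widetilde A^*)$, the inequality with $\widetilde A'^*$ is the same as the inequality with $\widetilde A^*$ because the two operators agree on $\mathcal V$; so the first two conditions together with the fourth are exactly equivalent to $A'_{\mathcal V}$ being dissipative. It then remains to identify the remaining content of ``$A'_{\mathcal V}$ is a proper extension of $(A,\widetilde A)$'' with the third and fourth conditions.

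For the forward implication, suppose $A\subset A'_{\mathcal V}\subset\widetilde A^*$ with $A'_{\mathcal V}$ dissipative. Then $\mathcal D(A)\subset\mathcal D(A'_{\mathcal V})$ is the third condition, $\mathcal V\subset\mathcal D(A'_{\mathcal V})\subset\mathcal D(\widetilde A^*)$ is the fourth, and dissipativity of $A'_{\mathcal V}$ yields the first two by the previous paragraph. For the converse, assume all four conditions. The first, second and fourth make $A'_{\mathcal V}$ dissipative via Theorem~\ref{prop:telemann}. For $f\in\mathcal D(A)$ the third condition gives $f\in\mathcal D(A'_{\mathcal V})$, and then $A'_{\mathcal V}f=\widetilde A'^*f=\widetilde A^*f=Af$ (using $\mathcal D(A)\subset\mathcal D(\widetilde A^*)$ and $\widetilde A^*\subset\widetilde A'^*$), so $A\subset A'_{\mathcal V}$. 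Since $\mathcal D(A'_{\mathcal V})=\mathcal D(A')\dot{+}\mathcal V$ with $\mathcal D(A')\subset\mathcal D(\widetilde A^*)$ and $\mathcal V\subset\mathcal D(\widetilde A^*)$, and since $\widetilde A'^*$ agrees with $\widetilde A^*$ throughout $\mathcal D(\widetilde A^*)$, we get $A'_{\mathcal V}\subset\widetilde A^*$. Hence $A'_{\mathcal V}$ is a proper dissipative extension of $(A,\widetilde A)$.

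I expect the only non-routine step to be establishing that $(A',\widetilde A')$ is genuinely a dual pair, i.e.\ the inclusion $A'\subset\widetilde A'^*$, since this requires a double closure argument rather than a one-line manipulation; the rest is careful bookkeeping about which of $A$, $A'$, $\widetilde A^*$, $\widetilde A'^*$ one is evaluating on a given vector. The conceptual point behind the third and fourth conditions is that, unlike in the common core case, $\mathcal D(A)$ need not be contained in $\mathcal D(A')$ (as the momentum operator example shows), so that neither $A\subset A'_{\mathcal V}$ nor $A'_{\mathcal V}\subset\widetilde A^*$ is automatic, and these two conditions are exactly what is needed to restore them.
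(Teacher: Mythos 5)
Your proof is correct and follows essentially the same route as the paper: reduce to Theorem~\ref{prop:telemann} applied to the common-core dual pair $(A',\widetilde A')$, and identify the last two conditions with properness of $A'_{\mathcal V}$ relative to $(A,\widetilde A)$ rather than $(A',\widetilde A')$. The only remark is that your separate ``double closure'' limiting argument for $A'\subset\widetilde A'^*$ is unnecessary: it already follows in one line from the chain $A'\subset\overline A\subset\widetilde A^*\subset\widetilde A'^*$, which you had already established and which is exactly how the paper obtains it.
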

\begin{proof} Since $\mathcal{D}(A)\cap\mathcal{D}(\widetilde{A})$ is dense, the operator $A\upharpoonright_{\mathcal{D}(A)\cap\mathcal{D}(\widetilde{A})}$ is a densely defined dissipative operator and thus closable. Moreover, since 
\begin{equation*}
\Imag\langle \psi,A\psi\rangle=\Imag \langle \widetilde{A} \psi,\psi\rangle=-\Imag\langle \psi,\widetilde{A}\psi\rangle\geq 0\quad\text{for all}\quad \psi\in\mathcal{D}(A)\cap\mathcal{D}(\widetilde{A})\:,
\end{equation*}
this shows that $\widetilde{A}\upharpoonright_{\mathcal{D}(A)\cap\mathcal{D}(\widetilde{A})}$ is a densely defined anti-dissipative operator. Thus, by construction, the operators $A'$ and $\widetilde{A}'$ are closed operators, which have the common core property. Moreover,
\begin{equation*}
A'\subset A\subset\widetilde{A}^*\subset {\widetilde{A}}'^*\:,
\end{equation*}
from which follows that any proper dissipative extension of the dual pair $(A,\widetilde{A})$ is a proper extension of the dual pair $(A',\widetilde{A}')$ as well. The corollary now follows from the observation that its first two conditions just correspond to an application of Theorem \ref{prop:telemann} for the dual pair $(A',\widetilde{A}')$ (which has the common core property) to ensure that $A'_\mathcal{V}$ is a dissipative extension of $A'$. The latter two conditions ensure that $A'_\mathcal{V}$ is not just a proper extension of the dual pair $(A',\widetilde{A}')$ but also of $(A,\widetilde{A})$.
\end{proof}
\begin{remark}
Since the dual pair $(A',\widetilde{A}')$ has the common core property and $A$ is a proper dissipative extension of this dual pair, Theorem \ref{prop:quadform} implies that there exists a contraction $C$ from $\mathcal{W}'_+$ into $\mathcal{W}'_-$ and a subspace $\mathfrak{M}'\subset \mathcal{W}'_+\oplus \mathcal{W}'_0$ such that $A=A'_{\mathfrak{M}',C}$, where the notation is the same as employed in \eqref{eq:extensioncontraction}. As any proper dissipative extension of the dual pair $(A,\widetilde{A})$ has to be a proper dissipative extension of $(A',\widetilde{A}')$ as well, for which Theorem \ref{prop:quadform} applies, this means that the problem of finding the proper dissipative extensions of $(A,\widetilde{A})$ is equivalent to determining $(\mathfrak{N},\widehat{C})$, where $\mathfrak{M}'\subset\mathfrak{N}$ and $\widehat{C}$ is a contractive extension of $C$  with the additional constraint that $A_{\mathfrak{N},\widehat{C}}\subset\widetilde{A}^*$ . For a full discussion of determining the contractive extensions of a given contraction, see \cite{AG}.
\end{remark}
\subsection{Illustrating examples}
In the following, we are going to apply our results to various ODE examples, which we have chosen to illustrate our results without having to worry too much about technicalities. 
\subsubsection{\bf{Weakly perturbed symmetric operators}}
As a first application of Theorem \ref{prop:telemann}, let us consider dual pairs of operators of the form $A=S+iV$ and $\widetilde{A}=S-iV$, where $S$ is closed and symmetric and $V$ is a positive symmetric operator, which has $S^*$-bound less than one.\footnote{Actually, we could consider dual pairs of the form $(S+D,S+\widetilde{D})$, where $(D,\widetilde{D})$ is a dual pair of dissipative/antidissipative perturbations, which are both relatively bounded with respect to $S^*$ with relative bound less than $1$.}
\begin{theorem} \label{prop:weaklyperturbed}
Let $S$ be a closed symmetric operator and $V$ be a non-negative symmetric operator with $S^*$-bound less than $1$. Moreover, let $\mathfrak{d}(A,\widetilde{A})$ denote the set of proper dissipative extensions of the dual pair $(A,\widetilde{A})$. Then, the set of all proper dissipative extensions of the dual pair $S+iV$ and $S-iV$ is given by
\begin{equation*}
\mathfrak{d}(S+iV,S-iV)=\{\widehat{S}+iV; \widehat{S}\in\mathfrak{d}(S,S)\}\:. 
\end{equation*}
\end{theorem}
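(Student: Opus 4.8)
The plan is to reduce the claim to Theorem \ref{prop:telemann} by identifying the relevant ``imaginary part'' operator $V$ for the dual pair $(S+iV, S-iV)$ and observing that it is nothing but the original $V$, so that the dissipativity criterion from Theorem \ref{prop:telemann} becomes identical for the two dual pairs $(S+iV,S-iV)$ and $(S,S)$. First I would check that $(S+iV,S-iV)$ is genuinely a dual pair with the common core property: since $V$ has $S^*$-bound less than $1$, the Hess--Kato Theorem (as already invoked in the Example following the common core definition) gives $(S-iV)^* = S^*+iV \supset S+iV$ and symmetrically $(S+iV)^* = S^*-iV \supset S-iV$, so $A := S+iV$ and $\widetilde{A} := S-iV$ form a dual pair; moreover $\mathcal{D}(A)\cap\mathcal{D}(\widetilde{A}) = \mathcal{D}(S)$ since $V$ is $S$-bounded hence defined on all of $\mathcal{D}(S)$, and $\mathcal{D}(S)$ is a core for both $\overline{S}\pm i\overline{V}$ again by relative boundedness, so the common core property holds with common core $\mathcal{D} = \mathcal{D}(S)$ (or any core for $S$).

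Next I would compute $V$ in the sense of Theorem \ref{prop:telemann}: on the common core $\mathcal{D}(S)$ one has $\tfrac{A-\widetilde{A}}{2i} = \tfrac{(S+iV)-(S-iV)}{2i} = V$, so the operator denoted $V$ in Theorem \ref{prop:telemann} is literally the given $V$, and its Kre\u\i n--von Neumann extension $V_K$ and square root $V_K^{1/2}$ are the same objects whether we start from $(S+iV,S-iV)$ or from the trivial dual pair $(S,S)$ (for which the imaginary part is also $V$, computed on $\mathcal{D}(S)\cap\mathcal{D}(S)=\mathcal{D}(S)$). Now I would unwind the membership: an operator $\widehat{B}$ is a proper dissipative extension of $(S+iV,S-iV)$ iff $\widehat{B} = A_\mathcal{V}$ for some subspace $\mathcal{V}\subset \mathcal{D}(\widetilde{A}^*)//\mathcal{D}(A)$ with $\mathcal{V}\subset\mathcal{D}(V_K^{1/2})$ and $\Imag\langle v,\widetilde{A}^*v\rangle\ge \|V_K^{1/2}v\|^2$ for all $v\in\mathcal{V}$. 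Since $\widetilde{A}^* = S^*+iV$, we have $\Imag\langle v,\widetilde{A}^*v\rangle = \Imag\langle v,S^*v\rangle + \langle v,Vv\rangle$ for $v\in\mathcal{V}\subset\mathcal{D}(S^*)\cap\mathcal{D}(V)$; the point is that the extra term from the shift by $iV$ lands on the correct side. The map $\mathcal{V}\mapsto\mathcal{V}$ (same subspace, but now viewed as extending $S$ via $S^*$ rather than $S+iV$ via $S^*+iV$) should set up the bijection: given a proper dissipative extension $A_\mathcal{V}$ of $(S+iV,S-iV)$, the operator $(A_\mathcal{V}) - iV$ has domain $\mathcal{D}(S)\dot{+}\mathcal{V}$ and acts as $S^*$ there, and I would show it is exactly $\widehat{S}_\mathcal{V}$ for the corresponding proper dissipative extension $\widehat{S}\in\mathfrak{d}(S,S)$, i.e. one must verify $\Imag\langle v,S^*v\rangle\ge \|V_K^{1/2}v\|^2 - \langle v,Vv\rangle$ is equivalent to the condition for the unperturbed problem; the key identity here is that for the dual pair $(S,S)$ the Ando--Nishio quantity is still $\|V_K^{1/2}v\|^2$, so the two criteria differ precisely by the term $\langle v,Vv\rangle$ which is accounted for by the $\pm iV$ shift. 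I would then check that $v\mapsto v$ also matches the domain-decomposition subspaces $\mathcal{D}(\widetilde{A}^*)//\mathcal{D}(A)$ and $\mathcal{D}(S^*)//\mathcal{D}(S)$, which coincide because $\mathcal{D}(S^*+iV)=\mathcal{D}(S^*)$ and $\mathcal{D}(S+iV)=\mathcal{D}(S)$.

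The main obstacle I anticipate is bookkeeping the ``imaginary part of the cross term'' carefully: in Theorem \ref{prop:telemann} the relevant inequality is stated purely in terms of $\widetilde{A}^*v$ and $V_K^{1/2}v$, and one must be sure that replacing $\widetilde{A}^* = S^*+iV$ by $S^*$ shifts $\Imag\langle v,\widetilde{A}^*v\rangle$ by exactly $\langle v,Vv\rangle$ and that $\langle v,Vv\rangle$ is finite and equals $\|V_F^{1/2}v\|^2$-type quantity consistent with the $V_K^{1/2}$ bound — in particular one needs $\mathcal{V}\subset\mathcal{D}(V)$, which follows since $V$ is $S^*$-bounded and $\mathcal{V}\subset\mathcal{D}(S^*)$. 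A secondary technical point is confirming that $\mathcal{D}(S)$ (or the chosen common core) really is a common core for $\overline{S+iV}$ and $\overline{S-iV}$; this is standard for relatively bounded perturbations with bound $<1$ but should be stated explicitly. Once these are in place, the correspondence $\widehat{S}+iV \leftrightarrow \widehat{S}$, i.e. $A_\mathcal{V} \leftrightarrow \widehat{S}_\mathcal{V}$ with the same parametrizing subspace $\mathcal{V}$, is a bijection between $\mathfrak{d}(S+iV,S-iV)$ and $\mathfrak{d}(S,S)$, which is the assertion.
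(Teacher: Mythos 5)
Your proposal is correct and takes essentially the same route as the paper: Hess--Kato gives $(S-iV)^*=S^*+iV$ with $\mathcal{D}(S^*+iV)=\mathcal{D}(S^*)$, the imaginary part of the perturbed dual pair is the given $V$ itself, and the criterion of Theorem \ref{prop:telemann} collapses to $\Imag\langle v,S^*v\rangle\geq 0$ via the identity $\langle v,Vv\rangle=\|V_K^{1/2}v\|^2$ on $\mathcal{D}(S^*)$. One small correction: for the unperturbed dual pair $(S,S)$ the imaginary-part operator is the zero operator, so the Ando--Nishio quantity there is $0$ (not $\|V_K^{1/2}v\|^2$ as you wrote); this is precisely why the unperturbed criterion is $\Imag\langle v,S^*v\rangle\geq 0$, which is what the rest of your computation needs.
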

\begin{proof} 
Firstly, let us apply Theorem \ref{prop:telemann} to the dual pair $(S,S)$, where $S$ is closed and symmetric. In this case, the operator $(S-S)/(2i)$ is identical to the zero operator on $\mathcal{D}(S)$, which has a unique bounded extension to the zero operator on the whole Hilbert space $\mathcal{H}$, i.e. $0_\mathcal{H}$. Thus, for any extension $S_\mathcal{V}$, where $\mathcal{V}\subset \mathcal{D}(S^*)//\mathcal{D}(S)$, we trivially have $\mathcal{V}\subset\mathcal{D}(0_\mathcal{H})=\mathcal{H}$. Thus, $\mathcal{V}$ needs only to satisfy the condition
\begin{equation} \label{eq:symmetrisch}
\Imag\langle v,S^*v\rangle\geq 0\quad\text{for all}\quad v\in\mathcal{V}\:.
\end{equation}
Next, let us consider the dual pair $(S+iV,S-iV)$.
By the Hess-Kato Theorem \cite[Corollary 1]{Hess-Kato}, we get that $(S-iV)^*=S^*+iV$, which we use together with Theorem \ref{prop:telemann}. By relative boundedness, we therefore have $\mathcal{D}((S-iV)^*)=\mathcal{D}(S^*)$ as well as $\mathcal{D}(S+iV)=\mathcal{D}(S)$, which means that we can choose $\mathcal{D}((S-iV)^*)//\mathcal{D}(S+iV)=\mathcal{D}(S^*)//\mathcal{}(S)$. Now, observe that
\begin{equation*}
\Imag\langle v,(S-iV)^*v\rangle=\Imag \langle v,(S^*+iV)v\rangle=\Imag \langle v,S^*v\rangle+\langle v,Vv\rangle
\end{equation*}
and that 
\begin{equation*}
\langle v,Vv\rangle=\|V_K^{1/2}v\|^2\quad\text{for all}\quad v\in\mathcal{D}(S^*)=\mathcal{D}(S^*+iV)\:,
\end{equation*}
which follows from relative boundedness of $V$ with respect to $S^*$. Hence, again we have that $\mathcal{V}\subset\mathcal{D}(V_K^{1/2})$ is always satisfied for any $\mathcal{V}\subset\mathcal{D}((S-iV)^*)//\mathcal{D}(S+iV)$. This implies that $\mathcal{V}$ only needs to satisfy
\begin{equation*}
\Imag \langle v, (S-iV)^*v\rangle\geq \|V_K^{1/2}v\|^2\quad\text{which is equivalent to}\quad\Imag \langle v,S^*v\rangle\geq 0\quad\text{for all}\quad v\in\mathcal{V}\:.
\end{equation*}
However, since this is equivalent to Condition \eqref{eq:symmetrisch}, we get that $(S+iV)_\mathcal{V}$ is dissipative if and only if $S_\mathcal{V}$ is dissipative.
\end{proof}
Let us start with the elementary example of a first order differential operator.
\begin{example} \label{ex:donnerkuchen}
Consider the closed symmetric operator on $L^2(0,1)$, which is given by
\begin{align*}
S:\quad\mathcal{D}(S)=\{ f\in H^1(0,1); f(0)=f(1)=0\},\quad f\mapsto if'\:,
\end{align*}
where $f'$ denotes the weak derivative of $f$. Its adjoint $S^*$ is given by
\begin{align*}
S^*:\quad\mathcal{D}(S^*)=H^1(0,1),\quad f\mapsto if'\:.
\end{align*}
Since for any $f\in\mathcal{D}(S^*)$, we have that
\begin{equation*}
\Imag\langle f,S^*f\rangle=\frac{1}{2}\left[|f(1)|^2-|f(0)|^2\right]\:,
\end{equation*}
it follows that all dissipative extensions of $S$ are given by
\begin{align*}
S_c:\quad\mathcal{D}(S_c):=\left\{f\in H^1(0,1);  f(0)=cf(1)\right\},\quad S_c=S^*\upharpoonright_{\mathcal{D}(S_c)}\:,
\end{align*}
where $c$ is any complex number such that $|c|\leq 1$. Using Lemma \ref{prop:dampfnudel}, it is in fact not hard to see that these extensions are also maximal.\\
Moreover, let $V$ be the selfadjoint maximal multiplication operator by a non-negative and non-zero $L^2$-function $V(x)$: 
\begin{align*}
V:\quad\mathcal{D}(V)=\left\{f\in L^2(0,1); \int_0^1 {|V(x)f(x)|^2}\text{d}x<\infty\right\},\quad \left(Vf\right)(x)={V(x)f(x)}\:.
\end{align*}
For example, one could pick $V(x)=x^{-\alpha}$ with $\boxed{0<\alpha< 1/2} $.
Using that $H^1(0,1)$ compactly embeds into the bounded continuous functions $C([0,1])$ we may use that by Ehrling's Lemma there exists for any $\varepsilon> 0$ a $C(\varepsilon)$ such that 
\begin{equation} \label{eq:ehrling}
\|f\|_\infty \leq\varepsilon\|f'\|+{C}(\varepsilon)\|f\|\:,
\end{equation}
for all $f\in H^1(0,1)$.
This allows us to show that $V$ is $S^*$-bounded with $S^*$-bound equal to zero:
\begin{equation*}
\|Vf\|_2\leq \|V\|_2\|f\|_\infty\overset{\eqref{eq:ehrling}}{\leq} \varepsilon \|V\|_2\|f'\|_2+{C}(\varepsilon)\|V\|_2\|f\|_2\:,
\end{equation*}
where $\varepsilon\|V\|_2$ can be made arbitrarily small.
Thus, for any non-negative $V\in L^2(0,1)$, we may conclude that all proper dissipative extensions of the dual pair $S+i V$ and $S-i V$ are given by $S_c+i V$ by virtue of Theorem \ref{prop:weaklyperturbed}.  
\begin{remark} Using that $V$ is $S^*$-bounded with relative bound equal to zero, we have in particular that $V$ is $S_c$-bounded with relative bound equal to zero as well. Thus, by the Hess-Kato Theorem (\cite[Corollary 1]{Hess-Kato})
\begin{equation*}
-(S_c+iV)^*=-(S_c)^*+iV\:.
\end{equation*}
By Proposition \ref{thm:bally}, we have that $-(S_c)^*$ is dissipative, which makes $-(S_c)^*+iV$ dissipative. By the same proposition, we therefore may conclude that $S_c+iV$ is maximally dissipative.
\end{remark}
\end{example}

\subsubsection{\bf{Differential operators with dissipative potentials}} \label{sec:diffdisspot}
For any $n\in\N$, let $p_0^n$ be the symmetric differential operator defined as follows
\begin{align*}
p_0^n:\quad\mathcal{D}(p_0^n)=\mathcal{C}_0^\infty(0,1),\quad f\mapsto i^nf^{(n)}\:,
\end{align*}
where $f^{(n)}$ denotes the $n^{\text{th}}$ derivative of $f$.
  Moreover, let $W\in L^2_{\text{loc}}(0,1)$ be a locally square-integrable potential function with $W\geq 0$ almost everywhere.
  This means that the dual pair of operators
  \begin{align} \label{eq:dualeins}
  A_0:\quad\mathcal{D}(A_0)=\mathcal{C}_0^\infty(0,1),\quad \left(A_0f\right)(x)=i^nf^{(n)}(x)+iW(x)f(x)
  \end{align}
  and
    \begin{align} \label{eq:dualzwei}
  \widetilde{A}_0:\quad\mathcal{D}(\widetilde{A}_0)=\mathcal{C}_0^\infty(0,1),\quad
  \left(\widetilde{A}_0f\right)(x)=i^nf^{(n)}(x)-iW(x)f(x)
  \end{align}
  is well defined. Moreover, their closures $A:=\overline{A_0}$ and $\widetilde{A}:=\overline{\widetilde{A}_0}$ have the common core property by construction. In Theorem \ref{prop:telemann}, the operator $V$ is defined as $\frac{A-\widetilde{A}}{2i}$ on a common core $\mathcal{D}\subset(\mathcal{D}(A)\cap\mathcal{D}(\widetilde{A}))$ and we choose $\mathcal{D}=\mathcal{C}_c^\infty(0,1)$. Since $V$ is already essentially selfadjoint,  this implies that the Kre\u\i n extension of $V$ coincides with its closure  $V_K=\overline{V}$ and is given by the maximal multiplication operator by the function $W(x)$. Thus, $V_K^{1/2}$ is given by
\begin{align*}
V_K^{1/2}:\qquad\mathcal{D}(V_K^{1/2})=\left\{f\in L^2(0,1): \int_0^1 W(x)|f(x)|^2\text{d}x<\infty\right\},\quad
\left(V_Kf\right)(x)=\sqrt{W(x)}f(x)\:.
\end{align*}
Moreover, it can be easily shown that the domains of $A^*$ and $\widetilde{A}^*$ are given by
\begin{align*}
\widetilde{A}^*:&\quad\mathcal{D}(\widetilde{A}^*)=\left\{f\in L^2(0,1); f\in H^n_{\text{loc}}(0,1); i^nf^{(n)}+iWf\in L^2\right\},\quad
f\mapsto i^nf^{(n)}+iWf\:,\\
{A}^*:&\quad\mathcal{D}({A}^*)=\left\{f\in L^2(0,1); f\in H^n_{\text{loc}}(0,1); i^nf^{(n)}-iWf\in L^2\right\},\quad
f\mapsto i^nf^{(n)}-iWf\:,
\end{align*}
with the understanding that $f^{(n)}$ denotes the $n^{\text{th}}$ weak derivative of $f$.
By Theorem \ref{prop:telemann}, the operator $A_\mathcal{V}$ (cf. Definition \ref{def:subspaceextension})
is maximally dissipative, only if $\mathcal{V}\subset\mathcal{D}(V_K^{1/2})$. Thus for any $v\in\mathcal{V}$ this implies that
\begin{equation} \label{eq:convpotential}
\int_0^1|v(x)|^2W(x)\text{d}x<\infty
\end{equation}
and since $v\in\mathcal{D}(\widetilde{A}^*)\subset L^2(0,1)$, which implies that $i^nv^{(n)}+iWv\in L^2(0,1)$, it follows that
\begin{align} \label{eq:convimag}
\overline{v(x)}i^nv^{(n)}(x)+i|v(x)|^2W(x)\in L^1(0,1)
\end{align}
 from which -- together with \eqref{eq:convpotential} and an application of the reverse triangle inequality --  it follows that
\begin{equation*}
\int_0^1\left|\overline{v(x)}i^nv^{(n)}(x)\right|\text{d}x<\infty\:,
\end{equation*}
i.e. $\overline{v}v^{(n)}\in L^1(0,1)$.
Hence, given that $v\in\mathcal{D}(V_K^{1/2})$ the necessary and sufficient condition for $A_\mathcal{V}$ to be dissipative
\begin{equation*}
\Imag \langle v,\widetilde{A}^*v\rangle\geq\|W^{1/2}v\|^2\quad\text{for all}\quad v\in\mathcal{V}
\end{equation*}
simplifies to
\begin{equation} \label{eq:principio}
\Imag\langle v, i^n v^{(n)}\rangle\geq 0\quad\text{for all}\quad v\in\mathcal{V}\:.
\end{equation}

\subsubsection{\bf{First order differential operators with singular potentials}} \label{ex:firstorder}
Let us apply the result of the previous subsection to the simplest case $n=1$. For any $\varepsilon>0$, any $x_0\in (0,1)$ and any $v\in H^1_{\text{loc}}(0,1)$ we have that
\begin{equation*}
|v(\varepsilon)|^2=|v(x_0)|^2-2\Imag\int_\varepsilon^{x_0}\overline{v(x)}iv'(x)\text{d}x
\end{equation*}
and since $\overline{v}v'\in L^1$, we have by an explicit calculation
\begin{equation*}
\lim_{\varepsilon\downarrow 0}|v(\varepsilon)|^2=\lim_{\varepsilon\downarrow 0}\left(|v(x_0)|^2-2\Imag\int_\varepsilon^{x_0}\overline{v(x)}iv'(x)\text{d}x\right)=|v(x_0)|^2-2\Imag\int_0^{x_0}\overline{v(x)}iv'(x)\text{d}x\:.
\end{equation*}
The same reasoning can be applied to show the existence of $\lim_{\varepsilon\downarrow 0}|v(1-\varepsilon)|^2$, which shows that $|v|^2$ is continuous up to the boundary of the interval. Defining, at least formally,
\begin{equation*}
|v(0)|^2:=\lim_{\varepsilon\downarrow 0}|v(\varepsilon)|^2\quad\text{and}\quad|v(1)|^2:=\lim_{\varepsilon\downarrow 0}|v(1-\varepsilon)|^2
\end{equation*}
we get that
\begin{equation} \label{eq:firstorderboundary}
\Imag\langle v,iv'\rangle=\frac{1}{2}\left(|v(1)|^2-|v(0)|^2\right)\quad\text{for all}\quad v\in H^1_{\text{loc}}(0,1): \overline{v}v'\in L^1\:.
\end{equation}
Let us now consider a few different potentials:
\begin{example}
Let $\boxed{1/2\leq \alpha < 1}$ and let the potential function be given by $W(x)=\frac{1-\alpha}{x^\alpha}$, where the numerator $(1-\alpha)$ is chosen for convenience (the case $0<\alpha<1/2$ has been covered in Example \ref{ex:donnerkuchen}).
By an explicit calculation, it can be shown that
\begin{equation*}
\mathcal{D}(\widetilde{A}^*)=\mathcal{D}(A)\dot{+}\spann\left\{\exp(-x^{1-\alpha}),\exp(-x^{1-\alpha})\int_0^x \exp(2t^{1-\alpha})\text{d}t\right\}
\end{equation*}
and it is easy to see that 
\begin{equation*}
\mathcal{D}(\widetilde{A}^*)//\mathcal{D}(A)=\spann\left\{\exp(-x^{1-\alpha}),\exp(-x^{1-\alpha})\int_0^x \exp(2t^{1-\alpha})\text{d}t\right\}\subset\mathcal{D}(V_K^{1/2})=\mathcal{D}(x^{-\frac{\alpha}{2}})\:,
\end{equation*}
where the last inclusion is guaranteed by the choice $\alpha<1$.
A standard linear transformation shows that it is possible to define two vectors $\phi,\psi\in\mathcal{D}(\widetilde{A}^*)//\mathcal{D}(A)$ such that
\begin{equation*}
\mathcal{D}(\widetilde{A}^*)=\mathcal{D}(A)\dot{+}\spann\{\phi,\psi\}
\end{equation*}
and $\phi,\psi$ satisfy the boundary conditions $$\psi(0)=1, \psi(1)=0,  \phi(0)=0, \phi(1)=1\:.$$
Thus, if we choose two complex numbers $(c_1,c_2)\in\C^2\setminus\{(0,0)\}$ in order to parametrize all one-dimensional proper extensions of $(A,\widetilde{A})$ as
\begin{align*}
A_{c_1,c_2}:\quad\mathcal{D}(A_{c_1,c_2})=\mathcal{D}(A)\dot{+}\spann\{c_1\phi+c_2\psi\},\quad
A_{c_1,c_2}=\widetilde{A}^*\upharpoonright_{\mathcal{D}(A_{c_1,c_2})}
\end{align*}
and plug $v_{c_1,c_2}:=c_1\phi+c_2\psi$ into \eqref{eq:firstorderboundary}, we get the condition that
\begin{equation*}
\Imag\langle v_{c_1,c_2},i v_{c_1,c_2}'\rangle=\frac{1}{2}\left(|c_1|^2-|c_2|^2\right)\geq 0\:,
\end{equation*}
i.e. $|c_1|\geq |c_2|$. Thus, we can parametrize all maximally dissipative proper extensions using only one complex parameter $c=c_2/c_1$ with $|c|\leq 1$ and get $\{A_c:|c|\leq 1\}$, where
\begin{align*}
A_{c}:\quad\mathcal{D}(A_{c})=\mathcal{D}(A)\dot{+}\spann\{\phi+c\psi\},\quad
A_{c}=\widetilde{A}^*\upharpoonright_{\mathcal{D}(A_{c})}
\end{align*}
as a complete description of the set of all proper maximally dissipative extensions. 
\end{example}
Let us now consider examples, where the singularity of the potential is of ``same strength" as the differential operator ($\alpha=1$).
\begin{example}
Let $\boxed{0<\gamma<1/2}$ and consider the potential
\begin{equation*} \label{eq:potential}
 W(x)=\frac{\gamma}{1-x}. 
\end{equation*} 
 Note that this is equivalent to considering the operator $-i\frac{\text{d}}{\text{d}y}+i\frac{\gamma}{y}$ after the coordinate change $(1-x)\mapsto y$, which leads to a change of sign in front of the differential part of the operator, changing the situation significantly compared to Example \ref{ex:eins}. \\In this case, a calculation shows that for our range of $\gamma$, we have
\begin{equation*} \mathcal{D}(\widetilde{A}^*)=\mathcal{D}(A)\dot{+}\spann\{(1-x)^\gamma,(1-x)^{1-\gamma}\}\:.
\end{equation*}
Since $0<\gamma<1/2$, it is true that 
\begin{equation*}
\spann\{(1-x)^\gamma,(1-x)^{1-\gamma}\}\subset \mathcal{D}(V_K^{1/2})=\mathcal{D}\left(\frac{1}{\sqrt{1-x}}\right)
\end{equation*}
and $\dim \ker A^*=1$, all proper dissipative extensions of $A$ will be at most one-dimensional extensions, i.e. of the form
\begin{equation*}
\mathcal{D}(A_{c_1,c_2}):=\mathcal{D}(A)\dot{+}\spann\{c_1(1-x)^\gamma+c_2(1-x)^{1-\gamma}\}\:,
\end{equation*}
where $(c_1,c_2)\in\C^2\setminus\{(0,0)\}$. Plugging $v_{c_1,c_2}:=c_1(1-x)^\gamma+c_2(1-x)^{1-\gamma}$  into Equation \eqref{eq:firstorderboundary}, we get the condition
\begin{equation} \label{eq:immernegativ}
\Imag \langle v_{c_1,c_2},iv_{c_1,c_2}'\rangle=-\frac{|c_1+c_2|^2}{2}\geq 0\:,
\end{equation}
which is satisfied if and only if $c_1=-c_2$. Thus, there exists a unique proper maximally dissipative extension of the dual pair $(A,\widetilde{A})$, which is given by
\begin{align*}
A':\quad\mathcal{D}(A')=\mathcal{D}(A)\dot{+}\spann\{(1-x)^\gamma-(1-x)^{1-\gamma}\},\quad A'=\widetilde{A}^*\upharpoonright_{\mathcal{D}(A')}\:.
\end{align*}
This is an example of a dual pair $(A,\widetilde{A})$ with a unique proper maximally dissipative extension, which does not satisfy the assumptions of Corollary \ref{coro:uniquediss}.\\
Next, let us compute the spaces $\mathcal{W}_+, \mathcal{W}_0$ and $\mathcal{W}_-$ as defined in Theorem \ref{prop:quadform}.
Since the form $q$ as defined in Equation \eqref{eq:form} is given by
\begin{equation*}
q(v)=\Imag\langle v,iv'\rangle=\frac{1}{2}(|v(1)|^2-|v(0)|^2)
\end{equation*}
and is non-positive for $v\in\spann\{(1-x)^\gamma,(1-x)^{1-\gamma}\}$ by virtue of Equation \eqref{eq:immernegativ}, we have found  the maximizer of $\langle v,Mv\rangle$ which corresponds to the eigenvalue zero:
\begin{equation*}
\mathcal{W}_0=\ker M=\spann\{(1-x)^\gamma-(1-x)^{1-\gamma}\}
\end{equation*}
and -- using the Gram-Schmidt procedure -- we compute
\begin{equation*}
\mathcal{W}_-=\spann\{\underbrace{(4\gamma^2-8\gamma-5)(1-x)^\gamma-(4\gamma^2-8\gamma+3)(1-x)^{1-\gamma}}_{=:w_-}\}
\end{equation*} 
\begin{equation*}
\text{with eigenvalue} \quad
\lambda_-=\frac{\langle w_-,Mw_-\rangle}{\langle w_-,w_-\rangle}=\frac{\frac{1}{2}{\left(|w_-(1)|^2-|w_-(0)|^2\right)}}{\int_0^1|w_-(x)|^2\text{d}x}=-\frac{2}{-4\gamma^2+4\gamma+7}\:.
\end{equation*} \label{ex:zwei}
\end{example}
\begin{example} \label{ex:eins}
Let $\boxed{0<\gamma<1/2}$ and consider the potential
\begin{equation*} \label{eq:criticpotential}
W(x)=\frac{\gamma}{x}\:.
\end{equation*}
In this case, a calculation shows that $\mathcal{D}(\widetilde{A}^*)=\mathcal{D}(A)\dot{+}\spann\{x^{-\gamma},x^{1+\gamma}\}$.
This is an example, for which Corollary \ref{coro:uniquediss} applies, since $\ker{\widetilde{A}^*}=\spann\{x^{-\gamma}\}$ has trivial intersection with $\mathcal{D}(V^{1/2}_K)=\{f\in L^2(0,1), \int_0^1 |f(x)|^2 x^{-1}\text{d}x<\infty\}$. Hence, the only possible candidate for a proper maximally dissipative extension for the dual pair $(A,\widetilde{A})$ is the operator $\widehat{A}$, which is given by
\begin{align*}
\widehat{A}:\quad\mathcal{D}(\widehat{A})=\mathcal{D}(A)\dot{+}\spann\{x^{1+\gamma}\},\quad
\widehat{A}=\widetilde{A}^*\upharpoonright_{\mathcal{D}(\widehat{A})}\:.
\end{align*}
By Proposition \ref{thm:friendly}, it is already clear that $\widehat{A}$ has to be a proper maximally dissipative extension. This can also be verified explicitely by by plugging $v(x):=x^{1+\gamma}$ into Condition \eqref{eq:firstorderboundary}. \\
 In this concrete case, we have that $\mathcal{W}_0=\mathcal{W}_-=\{0\}$ and $\mathcal{W_+}=\spann\{x^{1+\gamma}\}$. A short calculation shows that the corresponding eigenvalue of $M$ is given by $$\lambda_+=\frac{\langle x^{1+\gamma},Mx^{1+\gamma}\rangle}{\langle x^{1+\gamma},x^{1+\gamma}\rangle}=\frac{3}{2}+\lambda\:.$$

\end{example}
\subsubsection{\bf{A second order example}} \label{sec:secondorder}
Let us now apply our results to an example, where the operator $V$ as defined in the statement of Theorem \ref{prop:telemann} is not essentially selfadjoint.
To this end, consider the dual pair of operators given by 
\begin{align*}
A_0:\qquad\mathcal{D}(A_0)=\mathcal{C}_c^\infty(0,1),\quad
\left(A_0f\right)(x)=-if''(x)-\gamma\frac{f(x)}{x^2},\\
\widetilde{A}_0:\qquad\mathcal{D}(\widetilde{A}_0)=\mathcal{C}_c^\infty(0,1),\quad
\left(\widetilde{A}_0f\right)(x)=if''(x)-\gamma\frac{f(x)}{x^2}\:.
\end{align*}
Since we have
\begin{align*}
\Imag\langle f, A_0f\rangle=\Imag \int_0^1 \overline{f(x)} \left(-if''(x)-\gamma \frac{f(x)}{x^2}\right)\text{d}x=\int_0^1|f'(x)|^2\text{d}x
\end{align*}
for all $f\in\mathcal{C}_c^\infty(0,1)$, we can estimate $\Imag\langle f,A_0f\rangle$ from below by the lowest eigenvalue of the Dirichlet-Laplacian on the unit interval, which is $\pi^2$, i.e.\
\begin{equation} \label{eq:natale}
\Imag\langle f,A_0f\rangle\geq\pi^2\|f\|^2\quad\text{for all}\quad\psi\in\mathcal{D}(A_0)\:.
\end{equation}
Now, define $A:=\overline{A_0}$ and $\widetilde{A}:=\overline{\widetilde{A}_0}$, which means that the dual pair $(A,\widetilde{A})$ has the common core property by construction. Also,  \eqref{eq:natale} implies in particular that $0\in\widehat{\rho}(A)$. By a simple calculation, it can be shown that the operator $\widetilde{A}^*$ is given by:
\begin{align*}
\mathcal{D}(\widetilde{A}^*)=\left\{f\in H^2_{loc}(0,1): \int_0^1\left|-if''(x)-\gamma\frac{f(x)}{x^2}\right|^2\text{d}x<\infty\right\},\:
\left(\widetilde{A}^*f\right)(x)=-if''(x)-\gamma\frac{f(x)}{x^2}\:.
\end{align*}
A calculation, using Formula \eqref{eq:malamud} for $\lambda=0$, yields 
\begin{equation}
\mathcal{D}(\widetilde{A}^*)=\mathcal{D}(A)\dot{+}\spann\left\{x^\omega, x^{\overline{\omega}+2}\right\}\:,
\end{equation}
where $\omega:=(1+\sqrt{1+4i\gamma})/{2}$. Here we have assumed that $\boxed{\gamma\geq \sqrt{3}}$. This choice for $\gamma$ ensures that $\dim \ker \widetilde{A}^*=\dim \ker A^*=1$, which will make our calculations simpler. 
Also, observe that $\widetilde{A}^*=JA^*J$, where the conjugation operator $J$ is defined as $(Jf)(x):=\overline{f(x)}$. From this it immediately follows that $\mathcal{D}(A^*)=J\mathcal{D}(\widetilde{A}^*)=\{f:\overline{f}\in\mathcal{D}(\widetilde{A}^*)\}$. 
Now, let us apply the result of Theorem \ref{prop:telemann} in order to construct maximally dissipative extensions of the dual pair $(A,\widetilde{A})$. Let $\mathcal{D}=\mathcal{C}_c^\infty(0,1)$, which is a common core for $A$ and $\widetilde{A}$ and define $V:=\frac{1}{2i}(A-\widetilde{A})\upharpoonright_\mathcal{D}$, which is given by
\begin{align*}
V:\qquad\mathcal{D}(V)=\mathcal{C}_c^\infty(0,1),\quad
f\mapsto-f''\:.
\end{align*}
As the norm induced by $\|\cdot\|_V:=\|\cdot\|+\langle \cdot,V\cdot\rangle$ is the $H^1$-norm, closing $\mathcal{D}(V)=\mathcal{C}_c^\infty(0,1)$ with respect to $\|\cdot\|_V$ yields that $\mathcal{D}(V_F^{1/2})=H^1_0(0,1)$. Moreover, since $\ker V^*=\spann\{1,x\}$ and since by \eqref{eq:kreinformdomain}, we have $\mathcal{D}(V_K^{1/2})=\mathcal{D}(V_F^{1/2})\dot{+}\ker V^*$ it is clear that $\mathcal{D}(V_K^{1/2})=H^1(0,1)$ and moreover that 
\begin{equation} \label{eq:kreindiff}
\|V_K^{1/2}f\|^2=\|V_F^{1/2}\left[f(x)-f(0)-x(f(1)-f(0))\right]\|^2=\|f'\|^2-|f(1)-f(0)|^2\:,
\end{equation}
where the first equality follows from the decomposition \eqref{eq:kreinformdomain} and the second from an explicit calculation.
Using this, we can show that the form $q(v):=\Imag\langle v,\widetilde{A}^*v\rangle-\|V_K^{1/2}v\|^2$ defined on $\mathcal{D}(\widetilde{A}^*)//\mathcal{D}(A)=\spann\{x^\omega,x^{\overline{\omega}+2}\}$ is given by
\begin{equation*}
q(v)=-\Real\left(\overline{v(1)}v'(1)\right)+|v(1)|^2\:.
\end{equation*}
By Lemma \ref{prop:dampfnudel}, any maximally dissipative proper extension of $(A,\widetilde{A})$ can be parametrized by a one-dimensional subspace of $\spann\{x^\omega,x^{\overline{\omega}+2}\}$. A convenient basis for this is given by the two functions 
 \begin{align} \label{eq:functions}
 \psi(x):=\frac{(2+\overline{\omega_+})x^{\omega_+}-\omega_+x^{\overline{\omega_+}+2}}{2+\overline{\omega_+}-\omega_+}\quad\text{and}\quad\phi(x):=\frac{-x^{\omega_+}+x^{\overline{\omega_+}+2}}{2+\overline{\omega_+}-\omega_+}\:,
 \end{align}
 which satisfy the boundary conditions $\psi(1)=1, \psi'(1)=0, \phi(1)=0$ and $\phi'(1)=1$.\\
 Now define $\xi_\rho:=\rho\psi+\phi$, where $\rho\in \C$ has to be determined such that $q(\xi_\rho)\geq 0$. A short explicit calculation shows that this is the case if and only if
 \begin{equation*}
 \left|\rho-\frac{1}{2}\right|\geq \frac{1}{2}\:,
 \end{equation*}
 i.e. if and only if $\rho$ lies in the exterior of the open circle with radius and center point $\frac{1}{2}$. Since $q(\psi)=1>0$, we have that $\xi_\infty:=\psi$ describes a maximally dissipative extension as well.
 Thus the set of all proper maximally dissipative extensions of $(A,\widetilde{A})$ is given by
 \begin{align} \label{eq:arho}
 A_\rho:\qquad\mathcal{D}(A_\rho)=\mathcal{D}(A)\dot{+}\spann\{\xi_\rho\},\quad
 A_\rho=\widetilde{A}^*\upharpoonright_{\mathcal{D}(A_\rho)}\:,
 \end{align}
 where
 \begin{equation} \label{eq:setcondition}
 \rho\in \left\{z\in\C, \left|z-\frac{1}{2}\right|\geq \frac{1}{2}\right\}\cup\{\infty\}\:.
 \end{equation}

\section{Stability of the numerical range}
Let us now prove a useful result that allows us to estimate the lower bound of the imaginary part of the numerical range of the extensions of a dual pair $(A,\widetilde{A})$:
\begin{lemma} Let the dual pair $(A,\widetilde{A})$ satisfy the assumptions of Theorem \ref{prop:telemann} and let $\mathcal{V}$ be a subspace of $\mathcal{D}(\widetilde{A}^*)//\mathcal{D}(A)$ such that $\mathcal{D}(A_\mathcal{V})$ is a proper dissipative extension of the dual pair $(A,\widetilde{A})$. Moreover, for $v\in\mathcal{V}$, let $q(v):=\Imag\langle v,\widetilde{A}^*v\rangle-\|V_K^{1/2}v\|^2$. Then, it is true that
\begin{equation*}
\Imag\langle (f+v),A_\mathcal{V}(f+v)\rangle=\|V_K^{1/2}(f+v)\|^2+q(v)\geq\|V_K^{1/2}(f+v)\|^2\quad\text{for all}\quad f\in\mathcal{D}(A), v\in\mathcal{V}\:.
\end{equation*} \label{prop:cernohorsky}
\end{lemma}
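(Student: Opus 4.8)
The plan is to expand $\Imag\langle f+v,A_{\mathcal V}(f+v)\rangle$ into its ``diagonal'' and ``mixed'' parts and recognise each one. Since $A\subset\widetilde A^*$, for $f\in\mathcal D(A)$ and $v\in\mathcal V$ we have $A_{\mathcal V}(f+v)=\widetilde A^*(f+v)=Af+\widetilde A^*v$, so that
\[
\Imag\langle f+v,A_{\mathcal V}(f+v)\rangle=\Imag\langle f,Af\rangle+\Imag\langle v,\widetilde A^*v\rangle+\Imag\bigl(\langle f,\widetilde A^*v\rangle+\langle v,\widetilde A^*f\rangle\bigr),
\]
where I have used $\langle v,Af\rangle=\langle v,\widetilde A^*f\rangle$. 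I would then show: (i) $\Imag\langle f,Af\rangle=\|V_K^{1/2}f\|^2$ for every $f\in\mathcal D(A)$; (ii) the mixed term equals $2\Real\langle V_K^{1/2}f,V_K^{1/2}v\rangle$; while (iii) $\Imag\langle v,\widetilde A^*v\rangle=q(v)+\|V_K^{1/2}v\|^2$ is just the definition of $q$. Adding these three and completing the square turns the right-hand side into $\|V_K^{1/2}(f+v)\|^2+q(v)$, and since $A_{\mathcal V}$ is a \emph{dissipative} extension, Theorem \ref{prop:telemann} gives $q(v)=\Imag\langle v,\widetilde A^*v\rangle-\|V_K^{1/2}v\|^2\ge 0$ for all $v\in\mathcal V$, which yields the asserted inequality.

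The technical heart, and what I expect to be the only real obstacle, is step (i): upgrading the identity $\langle g,Vg\rangle=\Imag\langle g,Ag\rangle$ --- which by \eqref{eq:imaginary} holds on the common core $\mathcal D$ --- to all of $\mathcal D(A)$, including the statement $\mathcal D(A)\subset\mathcal D(V_K^{1/2})$. Since $\mathcal D$ is a core for $A$, choose $\{f_n\}\subset\mathcal D$ with $f_n\to f$ and $Af_n\to Af$. For $f_n-f_m\in\mathcal D\subset\mathcal D(V)\subset\mathcal D(V_K)$, equation \eqref{eq:imaginary} gives
\[
\|V_K^{1/2}(f_n-f_m)\|^2=\langle f_n-f_m,V(f_n-f_m)\rangle=\Imag\langle f_n-f_m,A(f_n-f_m)\rangle\longrightarrow 0,
\]
so $\{V_K^{1/2}f_n\}$ is Cauchy; as $V_K^{1/2}$ is self-adjoint, hence closed, it follows that $f\in\mathcal D(V_K^{1/2})$ and $V_K^{1/2}f_n\to V_K^{1/2}f$. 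Passing to the limit in $\|V_K^{1/2}f_n\|^2=\langle f_n,Vf_n\rangle=\Imag\langle f_n,Af_n\rangle$ then proves (i). In particular $f+v\in\mathcal D(V_K^{1/2})$ because $v\in\mathcal V\subset\mathcal D(V_K^{1/2})$ by Theorem \ref{prop:telemann}, so every norm appearing is finite.

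For step (ii) I would reuse the same sequence $\{f_n\}$. Each $f_n$ lies in $\mathcal D\subset\mathcal D(A)\cap\mathcal D(\widetilde A)$, so $\langle f_n,\widetilde A^*v\rangle=\langle\widetilde A f_n,v\rangle$ and $\langle\widetilde A^*f_n,v\rangle=\langle Af_n,v\rangle$, hence
\[
\langle f_n,\widetilde A^*v\rangle-\langle\widetilde A^*f_n,v\rangle=\langle(\widetilde A-A)f_n,v\rangle=2i\langle Vf_n,v\rangle=2i\langle V_K^{1/2}f_n,V_K^{1/2}v\rangle,
\]
using $(\widetilde A-A)\!\upharpoonright_{\mathcal D}=-2iV$, $V\subset V_K$, $v\in\mathcal D(V_K^{1/2})$ and self-adjointness of $V_K^{1/2}$. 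Letting $n\to\infty$ (with $f_n\to f$, $Af_n\to Af$ and $V_K^{1/2}f_n\to V_K^{1/2}f$) extends the identity to every $f\in\mathcal D(A)$, and since $\langle v,\widetilde A^*f\rangle=\overline{\langle\widetilde A^*f,v\rangle}$ and $\Imag(2iz)=2\Real z$,
\[
\Imag\bigl(\langle f,\widetilde A^*v\rangle+\langle v,\widetilde A^*f\rangle\bigr)=\Imag\bigl(\langle f,\widetilde A^*v\rangle-\langle\widetilde A^*f,v\rangle\bigr)=2\Real\langle V_K^{1/2}f,V_K^{1/2}v\rangle.
\]
Substituting (i), (ii), (iii) into the first display and using $\|V_K^{1/2}f\|^2+2\Real\langle V_K^{1/2}f,V_K^{1/2}v\rangle+\|V_K^{1/2}v\|^2=\|V_K^{1/2}(f+v)\|^2$ finishes the identity; the final inequality is then immediate from $q(v)\ge 0$.
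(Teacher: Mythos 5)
Your proof is correct and follows essentially the same route as the paper: the same three-term decomposition of $\Imag\langle f+v,A_{\mathcal V}(f+v)\rangle$, the same identities $\Imag\langle f,Af\rangle=\|V_K^{1/2}f\|^2$ and $\Imag\left(\langle f,\widetilde A^*v\rangle+\langle v,\widetilde A^*f\rangle\right)=2\Real\langle V_K^{1/2}f,V_K^{1/2}v\rangle$, completion of the square, and the final appeal to $q(v)\ge 0$ from Theorem \ref{prop:telemann}. The only difference is that the paper reduces to $f$ in the common core $\mathcal D$ by citing Lemma \ref{lemma:stollen}, whereas you make the passage from $\mathcal D$ to all of $\mathcal D(A)$ explicit via the Cauchy property of $\{V_K^{1/2}f_n\}$ and closedness of $V_K^{1/2}$ --- a detail the paper leaves implicit.
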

\begin{proof}
Let $f\in\mathcal{D}$ and $v\in\mathcal{V}$. As in the proof of Theorem \ref{prop:telemann}, we use Lemma \ref{lemma:stollen}, from which we know that it is sufficient to check the assertion for such $f$ and $v$. From an explicit calculation, we get
\begin{align} \label{eq:monteverdi}
\Imag\langle(f&+v),A_\mathcal{V}(f+v)\rangle=\Imag\langle(f+v),\widetilde{A}^*(f+v)\rangle\notag=\Imag\langle f,Af\rangle+\Imag\langle v,\widetilde{A}^*v\rangle\\&+\Imag(\langle f,\widetilde{A}^*v\rangle+\langle v,\widetilde{A}^*f\rangle)
=\Imag\langle f,A f\rangle+q(v)+\|V_K^{1/2}v\|^2+\Imag(\langle f,\widetilde{A}^*v\rangle+\langle v,\widetilde{A}^*f\rangle)\:.
\end{align}
Now, we can use that $\Imag\langle f,Af\rangle=\langle f,V f\rangle$, which implies in particular that $f\in\mathcal{D}\subset\mathcal{D}(V_K)\subset \mathcal{D}(V_K^{1/2})$ since $V_K$ is a selfadjoint extension of $V$. Thus, we have that
\begin{equation*}
\Imag\langle f,Af\rangle=\langle f,Vf\rangle=\|V_K^{1/2}f\|^2
\end{equation*}
and another calculation -- similar as in the proof of Theorem \ref{prop:telemann} -- shows that
\begin{equation*}
\Imag (\langle f,\widetilde{A}^*v\rangle+\langle v,\widetilde{A}^*f\rangle)=2\Real\langle V_K^{1/2}f,V_K^{1/2}v\rangle\:.
\end{equation*}
Plugging these two identities back into \eqref{eq:monteverdi} yields
\begin{align*}
\Imag\langle (f+v),\widetilde{A}^*(f+v)\rangle&=\|V_K^{1/2}f\|^2+2\Real\langle V_K^{1/2}f,V_K^{1/2}v\rangle+\|V_K^{1/2}v\|^2+q(v)\\&=\|V_K^{1/2}(f+v)\|^2+q(v)\:.
\end{align*}
Since by Theorem \ref{prop:telemann} we have that $q(v)\geq 0$ for all $v\in\mathcal{V}$ it trivially follows that
\begin{equation*}
\Imag\langle f+v,A_\mathcal{V}(f+v)\rangle\geq \|V_K^{1/2}(f+v)\|^2
\end{equation*}
for all $f\in\mathcal{D}(A)$ and $v\in\mathcal{V}$.
\end{proof}
\begin{example} As a first example, consider the dual pair $(A,\widetilde{A})$ from Subsection \ref{sec:secondorder}, with the maximally dissipative extensions $A_\rho$ as described in \eqref{eq:arho} and \eqref{eq:setcondition}. Again, it suffices to find a lower bound of $\Imag\langle f+v,\widetilde{A}^*(f+v)\rangle$ for all $f\in\mathcal{C}_c^\infty(0,1)$ and all $v\in\spann\{\xi_\rho\}$, where $\xi_\rho$ was defined in Subsection \ref{sec:secondorder}. Observe that
\begin{equation} \label{eq:imaginaryform}
\Imag \langle f+v,{A}_\rho(f+v)\rangle=\|f'+v'\|^2-\frac{\Real(\rho)}{|\rho|^2}|v(1)|^2=:\mathfrak{a}(f+v)
\end{equation}
and $\mathcal{C}_c^\infty(0,1)\dot{+}\spann\{\xi_\rho\}\subset \mathfrak{C}$, where $\mathfrak{C}:=\{f\in H^1(0,1): f(0)=0\}$. For the special cases $\rho=0$ and $\rho=\infty$, we have $$\Imag \langle f+v,A_\rho(f+v)\rangle=\|f'+v'\|^2=:\mathfrak{a}(f+v)\:.$$
 Now, since $\mathfrak{C}$ equipped with the norm induced by $\mathfrak{a}$ is a Hilbert space, this implies that $\Imag\langle f+v,A_\mathcal{V} (f+v)\rangle\geq \lambda_\rho \|f+v\|^2$, where
$\lambda_\rho$ is the lowest eigenvalue of the selfadjoint operator $S_\rho$ associated to $(\mathfrak{a},\mathfrak{C})$. This operator is given by
\begin{align*}
S_\rho:\qquad\mathcal{D}(S_\rho)=\left\{f\in H^2(0,1): f(0)=0\:\:\text{and}\:\:f'(1)=\frac{\Real(\rho)}{|\rho|^2}f(1)\right\},\quad
f\mapsto-f''\:,
\end{align*}
\vspace{-2mm}
with the understanding that the case $\rho=0$ corresponds to a Dirichlet boundary condition at one. As it is not difficult to solve the eigenvalue equation $S_\rho f=\lambda_\rho f$, where $\lambda_\rho$ is the smallest eigenvalue of $S_\rho$, one finds that $\lambda_\rho$ is given by $\lambda_\rho=z^2$, where $z$ is the smallest positive solution of the transcendental equation 
\begin{equation*}
\frac{\tan z}{z}=\frac{|\rho|^2}{\Real(\rho)}\:,
\end{equation*}
where $\rho\in\{z\in\C: z\neq 0, \Real(z)=0\}$ corresponds to the singularity of $\frac{\tan z}{z}$ at $z=\frac{\pi}{2}$.\\
For $\Real(\rho)<0$, this means in particular that $\Imag\langle f+v,A_\rho(f+v)\rangle\geq \frac{\pi^2}{4}\|f+v\|^2$ as can easily be seen from the fact that $(\tan z)/ z$ is positive in $[0,\pi/2)$ and non-positive in $(\pi/2,\pi]$.
\begin{remark} In this example, the estimate on the lower bound of the imaginary parts is also sharp. This follows from the fact that closing $\mathcal{C}_0^\infty(0,1)\dot{+}\spann\{\xi_\rho\}$ with respect to the norm induced by $\mathfrak{a}$ yields $\mathfrak{C}$ for $\rho\neq 0$ and closing $\mathcal{C}_0^\infty(0,1)\dot{+}\spann\{\xi_0\}$ with respect to the $H^1$-norm yields $H^1_0(0,1)$. 
\end{remark}

\end{example}
\begin{theorem} \label{coro:stablerange}
Let the dual pair $(A,\widetilde{A})$ satisfy the same conditions as in Theorem \ref{prop:telemann}. If in addition we have that $\mathcal{V}\subset\mathcal{D}(V_F^{1/2})$, we get that the imaginary part of the numerical range stays stable, i.e.
\begin{equation*}
\inf_{z\in\mathcal{N}_A}\text{\emph{Im}} z=\inf_{z\in\mathcal{N}_{A_\mathcal{V}}}\text{\emph{Im}} z\:,
\end{equation*}
where $\mathcal{N}_C$ denotes the numerical range of an operator $C$ and $A_\mathcal{V}$ is the extension of $A$ as described in Definition \ref{def:subspaceextension}.
This is true in particular for any dissipative extension of a dual pair operator $(A,\widetilde{A})$, where the associated operator $V$ is essentially selfadjoint.
\end{theorem}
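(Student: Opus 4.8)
The plan is to prove the two inequalities between $\inf_{z\in\mathcal{N}_A}\Imag z$ and $\inf_{z\in\mathcal{N}_{A_\mathcal{V}}}\Imag z$ separately. One of them is free: since $A\subset A_\mathcal{V}$ we have $\mathcal{N}_A\subset\mathcal{N}_{A_\mathcal{V}}$, so $\inf_{z\in\mathcal{N}_{A_\mathcal{V}}}\Imag z\le\inf_{z\in\mathcal{N}_A}\Imag z=:m$. To pin down $m$, I would use that $\mathcal{D}$ is a core for $A$: a normalisation-and-limit argument gives $m=\inf\{\Imag\langle f,Af\rangle:f\in\mathcal{D},\,\|f\|=1\}$, and on $\mathcal{D}$ we have $\Imag\langle f,Af\rangle=\langle f,Vf\rangle=\|V_F^{1/2}f\|^2$ by \eqref{eq:imaginary}. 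Since $\mathcal{D}=\mathcal{D}(V)$ is a form core for $V_F$, this further equals $\inf\{\|V_F^{1/2}g\|^2:g\in\mathcal{D}(V_F^{1/2}),\,\|g\|=1\}$; in other words $m$ is the bottom of the $V_F$-form, so that
\[
\|V_F^{1/2}g\|^2\ge m\,\|g\|^2\qquad\text{for all }g\in\mathcal{D}(V_F^{1/2}).
\]

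For the reverse inequality, recall that every $g\in\mathcal{D}(A_\mathcal{V})$ is of the form $g=f+v$ with $f\in\mathcal{D}(A)$, $v\in\mathcal{V}$, and apply Lemma \ref{prop:cernohorsky} (which uses that $A_\mathcal{V}$ is a proper dissipative extension): $\Imag\langle g,A_\mathcal{V}g\rangle=\|V_K^{1/2}(f+v)\|^2+q(v)\ge\|V_K^{1/2}(f+v)\|^2$. This is where the additional hypothesis enters. First, $\mathcal{D}(A)\subset\mathcal{D}(V_F^{1/2})$, because on the common core $\mathcal{D}$ the form norm $(\|\cdot\|^2+\langle\cdot,V\cdot\rangle)^{1/2}$ is dominated by the graph norm of $A$ (as $\langle f,Vf\rangle=\Imag\langle f,Af\rangle\le\|f\|\,\|Af\|$), so $A$-graph-convergent sequences from $\mathcal{D}$ are form-convergent. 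Together with $\mathcal{V}\subset\mathcal{D}(V_F^{1/2})$ this places $g=f+v$ in the Friedrichs form domain. On that subspace the quadratic forms of $V_K$ and $V_F$ coincide, i.e. $\|V_K^{1/2}g\|=\|V_F^{1/2}g\|$ for all $g\in\mathcal{D}(V_F^{1/2})$: indeed the closed form of $V_F$ is the closure of $g\mapsto\langle g,Vg\rangle$, and the closure is the smallest closed form extending it, hence agrees with the restriction of the (closed) form of $V_K$ --- which also extends $\langle\cdot,V\cdot\rangle$ --- to $\mathcal{D}(V_F^{1/2})$; in the strictly positive case this is simply \eqref{eq:kreinformdomain} with the $\ker V^*$-component taken to be zero. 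Consequently $\Imag\langle g,A_\mathcal{V}g\rangle\ge\|V_F^{1/2}g\|^2\ge m\,\|g\|^2$.

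Since this holds for every $g\in\mathcal{D}(A_\mathcal{V})$, we get $\inf_{z\in\mathcal{N}_{A_\mathcal{V}}}\Imag z\ge m$, which together with the first step gives the asserted equality of the lower bounds of the imaginary parts of the numerical ranges. For the final sentence of the statement: if $V$ is essentially selfadjoint then $V_K=V_F=\overline{V}$, hence $\mathcal{D}(V_K^{1/2})=\mathcal{D}(V_F^{1/2})$, and by Theorem \ref{prop:telemann} every proper dissipative extension $A_\mathcal{V}$ automatically satisfies $\mathcal{V}\subset\mathcal{D}(V_K^{1/2})=\mathcal{D}(V_F^{1/2})$, so the extra hypothesis is then vacuous.

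The only genuinely non-routine ingredient is the identity $\|V_K^{1/2}g\|=\|V_F^{1/2}g\|$ on $\mathcal{D}(V_F^{1/2})$ combined with the observation that $\mathcal{V}\subset\mathcal{D}(V_F^{1/2})$ is precisely the condition that lands $f+v$ in the form domain of $V_F$, on which the $V_K$- and $V_F$-forms agree; everything else is bookkeeping with cores and normalisations. If one prefers to bypass the abstract form-closure fact, the same identity can be read off the Ando--Nishio formula of Proposition \ref{thm:nett} by evaluating it along a form-norm approximating sequence drawn from $\mathcal{D}(V)=\mathcal{D}$.
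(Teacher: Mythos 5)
Your proposal is correct and follows essentially the same route as the paper: the trivial inequality from $\mathcal{N}_A\subset\mathcal{N}_{A_\mathcal{V}}$, the identification of $\inf_{z\in\mathcal{N}_A}\Imag z$ with the bottom of the $V_F$-form via the core and the stability of the numerical range under the Friedrichs extension, and the reverse inequality via Lemma \ref{prop:cernohorsky} together with the identity $\|V_K^{1/2}g\|=\|V_F^{1/2}g\|$ on $\mathcal{D}(V_F^{1/2})$. Your write-up is in fact slightly more explicit than the paper's (you justify $\mathcal{D}(A)\subset\mathcal{D}(V_F^{1/2})$ and the coincidence of the $V_K$- and $V_F$-forms on the Friedrichs form domain, which the paper uses without comment), but these are elaborations of the same argument rather than a different one.
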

\begin{proof}
For $f\in\mathcal{D}(A)\cap\mathcal{D}(\widetilde{A})$, we have that $f\in\mathcal{D}(V)\subset\mathcal{D}(V_F^{1/2})$. Now, since by assumption $\mathcal{V}\subset\mathcal{D}(V_F^{1/2})$, we get by virtue of Lemma \ref{prop:cernohorsky} that
\begin{equation} \label{eq:jroth}
\Imag\langle (f+v),\widetilde{A}^*(f+v)\rangle\geq\|V_K^{1/2}(f+v)\|^2=\|V_F^{1/2}(f+v)\|^2\:,
\end{equation}
for all $f\in\mathcal{D}(A)\cap\mathcal{D}(\widetilde{A})$ and for all $v\in\mathcal{V}$. Using that for all $f\in\mathcal{D}(A)\cap\mathcal{D}(\widetilde{A})$ we have that 
\begin{equation*}
\Imag\langle f,A f\rangle=\langle f,V f\rangle\:,
\end{equation*}
which implies that 
\begin{equation*}
\inf_{z\in\mathcal{N}_A}\Imag z=\inf_{x\in\mathcal{N}_{V}}x=\inf_{x\in\mathcal{N}_{V_F}}x\:,
\end{equation*}
where the last equality follows from the fact that the numerical range of the Friedrichs extension of a semibounded operator stays stable. Using Inequality \eqref{eq:jroth}, we therefore get
\begin{equation*}
\inf_{z\in\mathcal{N}_{A_\mathcal{V}}}\text{{Im}} z\geq \inf_{x\in\mathcal{N}_{V_F}}x=\inf_{z\in\mathcal{N}_{A}}\text{{Im}} z\:,
\end{equation*}
which together with the trivial estimate for taking the infimum over a larger set
$$ \inf_{z\in\mathcal{N}_{A_\mathcal{V}}}\text{{Im}} z\leq\inf_{z\in\mathcal{N}_{A}}\text{{Im}} z
$$
yields the theorem.
\end{proof}

\begin{example} As an example, consider the operators $(A_0,\widetilde{A}_0)$ as described in Section \ref{sec:diffdisspot}, \eqref{eq:dualeins} and \eqref{eq:dualzwei}.
Since the operator $V=\frac{1}{2i}(A_0-\widetilde{A}_0)$ is given by
\begin{align*}
V:\qquad\mathcal{D}(V)=\mathcal{C}_c^\infty(0,1),\quad
(Vf)(x)=W(x)f(x)\:,
\end{align*}
\vspace{-2mm}
which is essentially selfadjoint, we get that $V_K^{1/2}=V_F^{1/2}=\overline{V}^{1/2}$ is the maximal multiplication operator by $\sqrt{W(x)}$. 
Hence by virtue of Theorem \ref{coro:stablerange}, we get that for any proper maximally dissipative extension $A_\mathcal{V}$, we have
\begin{equation*}
\Imag\langle f+v, A_\mathcal{V} (f+v)\rangle\geq w\|f+v\|^2\:,
\end{equation*}
where $w:=\text{essinf}_{x\in(0,1)} W(x)=\inf_{f\in\mathcal{D}(A):\|f\|=1}\langle f,Af\rangle$.
\end{example}

\bigskip
\vspace{-2mm}
\vspace{-2mm}
\vspace{-2mm}
\vspace{-2mm}

\end{document}